\theoremstyle{plain}
\newtheorem{theorem}{Theorem}[section]
\newtheorem{corollary}[theorem]{Corollary}
\newtheorem{lemma}[theorem]{Lemma}
\newtheorem{definition}[theorem]{Definition}
\newtheorem{remark}[theorem]{Remark}
\newtheorem{prop}[theorem]{Proposition}
\newtheorem{example}[theorem]{Example}
\numberwithin{equation}{section}
\definecolor{darkred}{rgb}{0.7,0,0} 
\newcommand{\Z}{{\mathbb Z}}
\newcommand{\G}{\text{G}}
\newcommand{\Zr}{\mathbb{Z}_r}
\newcommand{\Grqn}{G(r,q,n)}
\newcommand{\Y}{{\mathcal Y}}
\newcommand{\La}{\lambda}
\newcommand{\bla}{\bm{\lambda}}
\newcommand{\bnu}{\bm{\nu}}
\newcommand{\tla}{\tilde{\bla}}
\newcommand{\tbla}{\tilde{\bla}}
\title[On Quasi Steinberg characters of Complex Reflection Groups]{On Quasi Steinberg characters of Complex \\ Reflection Groups}
\author[Mishra]{Ashish Mishra}
\address{Faculdade de Matemática, Universidade Federal do Pará, Brazil.}
\email{ashishmsr84@gmail.com}
\author[Paul]{Digjoy Paul}
\address{School of Mathematics, Tata Institute of Fundamental Research, Mumbai, India.}
\email{digjoypaul@gmail.com}
\author[Singla]{Pooja Singla}
\address{ Department of Mathematics and Statistics, Indian Institute of Technology Kanpur, Kanpur 208016, India. }
\email{psingla@iitk.ac.in}
\begin{document}
	
    \begin{abstract}
        Let $G$ be a finite group and $p$ be a prime number dividing the order of $G$. An irreducible character $\chi$ of $G$  is called a quasi $p$-Steinberg character if $\chi(g)$ is nonzero for every $p$-regular element $g$ in $G$. In this paper, we classify quasi $p$-Steinberg characters of the complex reflection groups $G(r,q,n)$. In particular, we obtain this classification for Weyl groups of type $B_n$ and type $D_n$. 
    \end{abstract}
\subjclass[2020]{05E10, 20F55, 20C15}
\keywords{Quasi Steinberg characters, Complex reflection groups, Weyl groups, Murnaghan--Nakayama rule}
\maketitle
	\section{Introduction}\label{intro}
	
	The  Steinberg characters and their importance are well known for the finite groups of Lie type; see Steinberg~\cite{MR80669, MR87659}, Curtis~\cite{MR201524}, and  Humphreys~\cite{MR876960}. 
Using the intrinsic property of the Steinberg character, Feit~\cite{F93} defined $p$-Steinberg character for any finite group $G$ and a prime $p$ dividing the order of $G$ (denoted $|G|$). Recall that an irreducible character $\theta$ of $G$ is called a \emph{$p$-Steinberg character} of $G$ if $\theta(x) = \pm |C_G(x)|_p$
	for every  $p$-regular element $x$ in $G$, i.e., $x$  having order co-prime to $p$.  Here $C_G(x)$ denotes the centralizer of $x$ in $G$ and $|n|_p$ denotes the $p$-part of an integer $n$.  Feit studied various properties of these $p$-Steinberg characters. He also conjectured that if a finite simple group $G$ has a $p$-Steinberg character, then $G$ is isomorphic to a simple group of Lie type in characteristic $p$. Darafasheh~\cite{D96}  and Tiep  ~\cite{T97} proved this conjecture. 
Recently,  many variants of $p$-Steinberg characters have been studied; see for example Pellegrini--Zalesski \cite{PZ16} and Malle--Zalesski \cite{MZ20}. These all have been  shown to be of importance in understanding the structure of finite groups. To answer a question of Dipendra Prasad, two of the authors of this article introduced the following notion of quasi $p$-Steinberg character of a finite group $G$ in \cite{PS22}.  
\begin{definition} 
Let  $G$ be a finite group and $p$ be a prime dividing the order of $G$. An irreducible character $\chi$ of $G$  is called a \emph{quasi $p$-Steinberg} character if $\chi(g) \neq 0$ for every $p$-regular element $g$ in $G$.
\end{definition} 

It is easy to see that every $p$-Steinberg character of a finite group $G$ is a quasi $p$-Steinberg character, but the converse is not true. The notion of quasi $p$-Steinberg also differs from other known variants of $p$-Steinberg character, see~\cite[Remark~1.11]{PS22}. In \cite{PS22},  authors gave a classification of all quasi $p$-Steinberg characters of Symmetric groups (see Table \ref{tab:quasi-Sn}), Alternating groups and their double covers. It is natural to ask for the classification of quasi $p$-Steinberg characters of any finite group $G$. Note that every linear character (one-dimensional) of $G$ is a quasi $p$-Steinberg character of $G$ for $p \mid |G|$. Therefore, we will only focus on non-linear characters of $G$.  

In this work, we classify the quasi $p$-Steinberg characters of the infinite family of finite irreducible complex reflection groups denoted by $G(r,q,n)$, see Section~\ref{sec: preliminaries} for the definition and other related results regarding these groups. In particular, we also classify quasi $p$-Steinberg characters of Weyl groups of type $B_n$ and type $D_n$. As mentioned above, the parallel classification for Weyl groups of type $A_n$ was obtained in \cite{PS22}. 

We now describe the main results of this paper. Our first result is a general result that is true for all finite groups. 

\begin{theorem}\phantomsection\label{p23}
	\begin{enumerate}[(i)]
	\item For $n \in \{2,3,4\}$, any $n$-dimensional irreducible character of a finite group $G$ is a quasi $p$-Steinberg character of $G$ for $p \mid n$.
	\item For a finite group $G$ and an automorphism $\mathfrak{a}$ of $G$, an irreducible character $\chi$ of $G$ is quasi $p$-Steinberg if and only if the irreducible character $^\mathfrak{a}\chi$, defined by $^\mathfrak{a}\chi(g) = \chi(\mathfrak{a}(g))$ for $g \in G$, is quasi $p$-Steinberg. 
	\end{enumerate} 
\end{theorem}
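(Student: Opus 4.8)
The plan is to treat the two parts independently, since they rely on quite different ideas. For part (ii), the key observation is that an automorphism $\mathfrak{a}$ of $G$ permutes the conjugacy classes of $G$, and moreover it preserves the order of each element. Hence $g$ is $p$-regular if and only if $\mathfrak{a}(g)$ is $p$-regular. Since $^{\mathfrak{a}}\chi$ is again an irreducible character of $G$ (precomposition with an automorphism sends $\mathrm{Irr}(G)$ bijectively to itself), the condition ``$^{\mathfrak{a}}\chi(g) \neq 0$ for every $p$-regular $g$'' is, after the substitution $g \mapsto \mathfrak{a}^{-1}(g)$, literally the same as ``$\chi(h) \neq 0$ for every $p$-regular $h$''. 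So part (ii) is essentially a one-line verification once the two facts (automorphisms preserve element order; precomposition preserves irreducibility) are spelled out.

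For part (i), I would argue by a ``small denominators'' / size-of-image argument on character values. Fix an irreducible character $\chi$ of degree $n \in \{2,3,4\}$ and a prime $p \mid n$, so $p \in \{2,3\}$ (indeed $p=2$ if $n \in \{2,4\}$ and $p \in \{2,3\}$ if $n = 3$). Let $g$ be a $p$-regular element, say of order $m$ with $p \nmid m$. Then $\chi(g)$ is a sum of $n$ roots of unity each of order dividing $m$; in particular $\chi(g)$ is an algebraic integer in $\mathbb{Q}(\zeta_m)$ with $|\chi(g)| \le n$. Suppose for contradiction that $\chi(g) = 0$. I would then use the standard fact that $\chi(g)/\chi(1)$ is either zero or... no—rather, I would use the orthogonality/column relations together with the bound $|\chi(g)| \le n$ and the constraint $p \mid n$ to force a contradiction. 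Concretely, the cleanest route is: since $g$ has order prime to $p$, reduction modulo a prime ideal above $p$ is well-behaved on the cyclic group $\langle g \rangle$, and one can analyze the Brauer character / the reduction of $\chi$ restricted to $\langle g \rangle$. For $n \le 4$ and $p \mid n$ the degree is too small to allow a nonvanishing obstruction to collapse; more elementary, one checks that a sum of $n \le 4$ roots of unity of order prime to $p$ cannot vanish unless it is a ``balanced'' sum (e.g. $\zeta + \zeta^{-1} + \ldots$) whose vanishing would force a root of unity of order divisible by $p$, contradicting $p$-regularity of $g$.

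Let me restructure part (i) more carefully. Write $\chi(g) = \sum_{i=1}^{n} \varepsilon_i$ where each $\varepsilon_i$ is a root of unity of order dividing $m$, and $\gcd(m,p)=1$. If $\chi(g) = 0$ with $n = 2$, then $\varepsilon_1 = -\varepsilon_2$, so $-1 = \varepsilon_1/\varepsilon_2$ is an $m$-th root of unity, forcing $m$ even; but $p = 2 \mid m$ contradicts $p$-regularity. If $n = 4$ and $\chi(g)=0$, I would pair up the roots: a vanishing sum of four roots of unity of odd order must (by a theorem on vanishing sums of roots of unity, or by an elementary minimal-polynomial argument over $\mathbb{Q}(\zeta_m)$ with $m$ odd) decompose as two cancelling pairs $\varepsilon_1 + \varepsilon_2 = 0$ and $\varepsilon_3 + \varepsilon_4 = 0$, again forcing $2 \mid m$, contradiction. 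The case $n = 3$ with $p = 3$ is the genuinely interesting one: a vanishing sum of three roots of unity of order prime to $3$ must be a rotate of $1 + \omega + \omega^2$ where $\omega$ is a primitive cube root of unity, i.e. $\varepsilon_1,\varepsilon_2,\varepsilon_3$ are in ratio $1 : \omega : \omega^2$, which forces $3 \mid m$, contradicting $p = 3 \mid m$. The case $n = 3$, $p = 2$ is handled as in the $n=2$ and $n=4$ cases (a vanishing sum of three roots of unity of odd order is impossible by a parity/valuation argument).

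The main obstacle I anticipate is getting the $n=3$, $p=3$ and the general $n=4$ cases airtight with a genuinely elementary argument rather than invoking heavy machinery on vanishing sums of roots of unity; the cleanest fix is to cite the classical classification of minimal vanishing sums of roots of unity (the only ones of ``length'' $\le 4$ are $\{\zeta, -\zeta\}$ of length $2$ and $\{\zeta, \zeta\omega, \zeta\omega^2\}$ of length $3$), from which every vanishing sum of $\le 4$ roots of unity of order coprime to $p$ (for $p \in \{2,3\}$ appropriately) is seen to require a root of unity of order divisible by $p$ among the ratios, contradicting $p$-regularity of $g$. Everything else is routine.
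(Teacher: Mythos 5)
Your proposal takes essentially the same route as the paper: for (i), write $\chi(g)$ as a sum of $n$ roots of unity whose orders divide the order of the $p$-regular element $g$, and rule out vanishing using the classification of short vanishing sums of roots of unity (the only minimal ones of length at most $4$ are the length-$2$ and length-$3$ ones you describe) -- the paper does exactly this, citing the tables of Poonen--Rubinstein and Lam--Leung; for (ii), both arguments are the same substitution using that an automorphism preserves element orders and irreducibility. One caveat: since $p \mid n$, the case $n=3$ concerns only $p=3$, and your extraneous aside that a vanishing sum of three roots of unity of odd order is impossible is false ($1+\omega+\omega^2=0$ with $\omega$ a primitive cube root of unity; concretely, the $3$-dimensional irreducible character of $A_4$ vanishes on $3$-cycles, which are $2$-regular), so that ``$n=3$, $p=2$'' case should simply be dropped -- it is neither part of the statement nor true.
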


The following useful corollary follows directly from Theorem \ref{p23}(ii) and Clifford theory.

\begin{corollary}\label{resqs}
	For a normal subgroup $N$ and an irreducible character $\chi$ of $G$, either all the irreducible characters of $N$ appearing in the restriction $\chi|_N$ are quasi $p$-Steinberg characters or none of these are. 
\end{corollary}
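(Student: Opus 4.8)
The plan is to use Clifford theory to relate the irreducible characters of $N$ appearing in $\chi|_N$ via the conjugation action of $G$, and then invoke Theorem \ref{p23}(ii) with the automorphisms of $N$ given by conjugation. First I would recall the relevant consequence of Clifford's theorem: if $\chi \in \irr(G)$ and $N \trianglelefteq G$, then $\chi|_N = e\sum_{i=1}^{t}\psi_i$, where $\psi_1,\dots,\psi_t$ are distinct irreducible characters of $N$ forming a single $G$-orbit under the action $g\cdot\psi(n) = \psi(g^{-1}ng)$, and $e$ is a positive integer. In particular, for each pair $\psi_i,\psi_j$ there exists $g \in G$ with $\psi_j = {}^{\mathfrak{a}_g}\psi_i$, where $\mathfrak{a}_g$ is the (inner, but that is irrelevant) automorphism $n \mapsto g^{-1}ng$ of $N$.

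Next I would observe that $p \mid |N|$: indeed $\psi_i(1)$ divides $\chi(1)$, and more to the point one only speaks of quasi $p$-Steinberg characters of $N$ when $p \mid |N|$; the statement is vacuous otherwise, so we may assume $p \mid |N|$ and that Theorem \ref{p23}(ii) applies to $N$ with any automorphism of $N$. Applying Theorem \ref{p23}(ii) to the group $N$, the automorphism $\mathfrak{a}_g$, and the character $\psi_i$, we get that $\psi_i$ is quasi $p$-Steinberg if and only if ${}^{\mathfrak{a}_g}\psi_i = \psi_j$ is quasi $p$-Steinberg. Since every $\psi_j$ in $\chi|_N$ is of this form, all the $\psi_i$ are simultaneously quasi $p$-Steinberg or simultaneously not. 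This is exactly the assertion of the corollary.

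There is essentially no obstacle here: the only thing to be careful about is the hypothesis "$p \mid |G|$" versus "$p \mid |N|$" in the definition of quasi $p$-Steinberg character, which is handled by noting the statement is about characters of $N$ and hence implicitly assumes $p \mid |N|$; and the mild point that the $p$-regular elements of $N$ are precisely the elements of $N$ that are $p$-regular in $G$, so the notion is unambiguous. The core of the argument is the one-orbit statement from Clifford theory combined with the automorphism-invariance in Theorem \ref{p23}(ii).
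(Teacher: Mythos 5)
Your argument is correct and is exactly the route the paper intends: it states that the corollary ``follows directly from Theorem~\ref{p23}(ii) and Clifford theory,'' i.e.\ the constituents of $\chi|_N$ form a single $G$-orbit under the conjugation automorphisms of $N$, to which Theorem~\ref{p23}(ii) applies. Your additional remarks about $p \mid |N|$ and about $p$-regularity in $N$ versus $G$ are sensible bookkeeping but do not change the substance.
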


A proof of Theorem \ref{p23} is included in Section~\ref{sec:main-results}. We observe that Theorem \ref{p23}(i) does not hold true for any $n \geq 5$, see Remark~\ref{rk:p23}. In view of Theorem \ref{p23}(i), we call an irreducible character $\chi$ of $G$ to be a {\it hefty} character if $\chi$ has dimension greater than or equal to $5$. While proving our results, we will focus on characterizing the hefty quasi $p$-Steinberg characters of $G(r, q, n)$.

Let $\Y(r,n)$ denote the \emph{set of all $r$-partite Young diagrams} with total number of boxes being $n$. It is well-known that the set $\Y(r,n)$ indexes the irreducible representations of $G(r,1,n)$. For $\bla \in \Y(r,n)$, we use $\chi^{\bla}$ to denote the corresponding irreducible character of $G(r, 1, n)$. In the following definition, we consider certain special elements of $\Y(r,n)$. 

	\begin{definition}\phantomsection\label{lajk}
 For $j\in \{0,1,\cdots r-1\}$, define $\widehat{\bla}^j= (\bla_0, \bla_1, \ldots, \bla_{r-1}) \in \Y(r,n)$ by
$$
 	\bla_j \vdash n \text{ and } \bla_l = \emptyset \text{ for } l \neq j,$$
 	and for $j,k \in \{0,1,\cdots r-1\}$ such that $j \neq k$, define $\widehat{\bla}^{j,k}=(\bla_0, \bla_1, \ldots, \bla_{r-1}) \in \Y(r,n)$ by
 	$$ \bla_j \vdash n-1, \bla_k = (1) \text{ and } \bla_l = \emptyset \text{ for } l \notin \{j,k\}.$$

\end{definition}

The following theorem gives a classification of quasi $p$-Steinberg characters of $G(r,1,n)$ which we prove in Section \ref{sec:main-results}.	

\begin{theorem}\label{MTq1}
	For $n\geq 2$, let $\bla = (\bla_0, \bla_1, \ldots, \bla_{r-1})$ be an $r$-partite partition of $n$ such that $\bla_t \notin \{(n), (1^n)\}$ for any $0 \leq t \leq r-1$. All triples $(n,\bla, p)$ such that $\chi^{\bla}$ is a quasi $p$-Steinberg character of $G(r,1,n)$ are given in Table \ref{tab:quasi-Gr1n}.

	\begin{table}[ht] 
		\begin{tabular}[t]{c|c|c|c}
			\,\,\,\,\, \,\,\, $n$ \,\,\,\,\, \,\,\,& \,\,\,\,\,$\bla$ \, \, &  \,\,\,\,\, \,\,\ $\bla_j$\,\,\, \, \,\,\,&\,\,\,\,\, \,\,\, $p$  \,\,\,\,\, \,\,\, \\
			\hline \hline 
			$2$ & $\widehat{\bla}^{j,k}$ & $(1)$ &$2$ \\
			\hline
			$3$ & $\widehat{\bla}^j$& $(2,1)$ & $2$ \\
			$3$ & $\widehat{\bla}^{j,k}$&$(2), (1,1)$ & $3$ \\
			\hline 
			$4$ & $\widehat{\bla}^j$ & $(2,2)$ & $2$ \\
			$4$ & $\widehat{\bla}^{j,k}$& $(3), (2,1), (1,1,1)$& $2$ \\
			$4$ & $\widehat{\bla}^j$& $(3,1), (2,1,1)$  & $3$ \\
			\hline 
			$5$ &  $\widehat{\bla}^j$ & $(4,1)$, $(2,1,1,1)$ & $2$ \\
			$5$ & $\widehat{\bla}^j$ & $(3,2)$, $(2,2,1)$ & $5$ \\
			\hline 
			$6$ & $\widehat{\bla}^j$ & $(3,2,1)$ & $2$ \\
			$6$ & $\widehat{\bla}^j$ & $(4,2)$, $(2,2,1,1)$ & $3$ \\
			\hline 
			$8$ & $\widehat{\bla}^j$ & $(5,2,1), (3,2,1,1,1)$ & $2$ \\
			\hline 
		\end{tabular}
		\vspace{.2cm} 
		\caption{quasi $p$-Steinberg characters of $G(r,1,n)$}\label{tab:quasi-Gr1n}
	\end{table}
\end{theorem}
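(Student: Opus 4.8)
The plan is to split the argument according to how many components of $\bla$ are nonempty, using throughout the presentation $G(r,1,n)=\Zr\wr S_n=\Zr^n\rtimes S_n$, the parametrization of its conjugacy classes by $r$-tuples of partitions $(\mu^{(0)},\dots,\mu^{(r-1)})$ with $\sum_i|\mu^{(i)}|=n$ (where the parts of $\mu^{(i)}$ are the lengths of the cycles of total colour $i\in\Zr$), and the generalized Murnaghan--Nakayama rule for the values $\chi^{\bla}$ recalled in Section~\ref{sec: preliminaries}. From the order formula for elements of $\Zr\wr S_n$, such a class is $p$-regular precisely when every part of every $\mu^{(i)}$ is coprime to $p$ and $\mu^{(i)}=\emptyset$ whenever the order of $i$ in $\Zr$ is divisible by $p$. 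In particular $(\mathbf 0,\sigma)$ is $p$-regular in $G(r,1,n)$ for every $p$-regular $\sigma\in S_n$, so the surjection $G(r,1,n)\twoheadrightarrow S_n$ carries the $p$-regular elements of $G(r,1,n)$ onto the $p$-regular elements of $S_n$.

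First I would handle the characters $\chi^{\widehat{\bla}^j}$. If $\bla_j=\mu\vdash n$ then $\chi^{\widehat{\bla}^j}=\eta\cdot\mathrm{Inf}_{S_n}^{G(r,1,n)}(\chi^{\mu})$, where $\eta$ is the linear character $(\mathbf a,\sigma)\mapsto\zeta_r^{\,j(a_1+\cdots+a_n)}$ of $G(r,1,n)$. As $|\eta(g)|=1$, the value $\chi^{\widehat{\bla}^j}(g)$ vanishes iff $\chi^{\mu}$ vanishes on the image of $g$ in $S_n$; together with the last sentence of the previous paragraph this gives that $\chi^{\widehat{\bla}^j}$ is quasi $p$-Steinberg if and only if $p\le n$ and $\chi^{\mu}$ is quasi $p$-Steinberg for $S_n$ (when $p\mid r$ but $p>n$, every element of $S_n$ is $p$-regular, so the condition would force $\chi^\mu$ to be non-vanishing, hence linear by Burnside's theorem, contradicting $\bla_j\notin\{(n),(1^n)\}$). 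The $\widehat{\bla}^j$-rows of Table~\ref{tab:quasi-Gr1n} then follow from the classification for $S_n$ recorded in Table~\ref{tab:quasi-Sn}, each of whose triples already has $p\le n$.

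Next suppose at least two components of $\bla$ are nonempty. If $p\nmid n$, the $n$-cycle $(\mathbf 0,(1\,2\,\cdots\,n))$ is $p$-regular, and $\chi^{\bla}$ vanishes on it: the Murnaghan--Nakayama rule would require removing a rim hook of size $n$ from a single component, each of which has fewer than $n$ boxes. If $p\mid n$, a class of cycle type $(n-1,1)$ with all colours $0$ is $p$-regular, and the Murnaghan--Nakayama rule (remove a rim hook of size $n-1$, then one of size $1$) shows $\chi^{\bla}$ vanishes on it unless $\bla=\widehat{\bla}^{j,k}$ with $\bla_j$ a hook of size $n-1$ and $\bla_k=(1)$. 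For such $\bla$, if $n$ can be written as a sum of integers each $\ge 2$ and coprime to $p$, then the class of the corresponding cycle type with all colours $0$ is $p$-regular and $\chi^{\bla}$ vanishes on it, since the one-box component $\bla_k$ cannot be emptied by rim hooks of size $\ge 2$. A short combinatorial check shows such a decomposition of $n$ exists for every $p\mid n$ except $(p,n)\in\{(2,2),(2,4),(3,3)\}$; so outside these three pairs $\chi^{\bla}$ is not quasi $p$-Steinberg.

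It remains to confirm that for $(p,n)\in\{(2,2),(2,4),(3,3)\}$ and $\bla=\widehat{\bla}^{j,k}$ with $\bla_j$ an arbitrary hook of size $n-1$ and $\bla_k=(1)$, the character $\chi^{\bla}$ is quasi $p$-Steinberg; this yields the remaining rows of Table~\ref{tab:quasi-Gr1n}. All these characters have dimension at most $4$, so are quasi $p$-Steinberg by Theorem~\ref{p23}(i), except the one with $n=4$, $\bla_j=(2,1)$, $p=2$, of dimension $8$. For that one I would compute directly: $G(r,1,4)$ has only two $2$-regular cycle types (the identity of $S_4$ with arbitrary $2$-regular colours, and cycle type $(3,1)$), and the Murnaghan--Nakayama rule gives $\chi^{\bla}$ the values $2\,\zeta_r^{\,j(a_1+\cdots+a_4)}\sum_{v=1}^{4}\zeta_r^{\,(k-j)a_v}$ and $-\zeta_r^{\,jc_1+kc_2}$ on them, which are nonzero because a sum of four roots of unity of odd order cannot vanish. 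The reduction steps above are all short once the $p$-regular classes are pinned down; the real work is the combinatorial lemma isolating the three exceptional pairs $(p,n)$ and carrying out the Murnaghan--Nakayama evaluations uniformly in $r$ and $p$.
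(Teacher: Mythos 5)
Your argument is correct and follows essentially the same route as the paper: reduce via the Murnaghan--Nakayama rule evaluated at suitably chosen $p$-regular elements to the shapes $\widehat{\bla}^j$ and $\widehat{\bla}^{j,k}$, transfer the $\widehat{\bla}^j$ case to the $S_n$ classification (your twist-by-$\eta$ identity is just a repackaging of Corollary~\ref{MN cor} and Proposition~\ref{QGS}), kill the two-component shapes outside small $(p,n)$ by exhibiting $p$-regular cycle types with all parts at least $2$ (the paper uses the specific types $(n-2,2)$ and $(n-3,3)$), and finish the surviving small cases by Theorem~\ref{p23}(i) plus the same direct computation and ``no vanishing sum of four odd-order roots of unity'' argument for the eight-dimensional character with $\bla_j=(2,1)$, $n=4$, $p=2$. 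The only (harmless) additions are your explicit Burnside treatment of the case $p\mid r$, $p>n$ and the slightly more uniform combinatorial lemma isolating the exceptional pairs $(2,2),(2,4),(3,3)$.
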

Note that while describing $\widehat{\bla}^{j,k}$ in Table \ref{tab:quasi-Gr1n}, we mention only $\bla_j$ because $\bla_k=(1)$. To prove this result, we use Murnaghan--Nakayama rule for $G(r, 1, n)$ along with Theorem~\ref{p23}(i).  Given $\bla = (\bla_0, \bla_1, \ldots, \bla_{r-1}) \in \Y(r,n)$, we use the notation $(\chi^{\bla})^*$ to denote an irreducible character of $\Grqn$ which appears in the decomposition of the restriction of the irreducible character $\chi^{\bla}$ of $G(r,1,n)$ to $\Grqn$. In our next result, we classify all hefty quasi $p$-Steinberg characters of $G(r,q,n)$, see Theorem~\ref{MTqnot1} for a finer statement from which the following result follows.
	\begin{theorem}\label{MTqnot1-hefty}
Let $n \geq 2$ and $(\chi^{\bla})^*$ be a hefty irreducible character  of $G(r,q,n)$ associated with an $r$-partite partition $\bla = (\bla_0, \bla_1, \ldots, \bla_{r-1})$ of $n$. Then $(\chi^{\bla})^*$  is a quasi $p$-Steinberg characters of $G(r, q, n)$ if and only  $\bla$ is as given in the Table \ref{tab:quasi-Gr1n} and $\chi^{\bla}$ is a hefty irreducible character of $G(r, 1, n)$. 
\end{theorem}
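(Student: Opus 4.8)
The strategy is to reduce the classification over $G(r,q,n)$ to the already-established classification over $G(r,1,n)$ (Theorem~\ref{MTq1}) by exploiting Clifford theory with respect to the normal subgroup $G(r,q,n) \trianglelefteq G(r,1,n)$, whose quotient is cyclic of order $q$. The key structural fact is that $G(r,1,n)/G(r,q,n) \cong \mathbb{Z}/q\mathbb{Z}$, so by Clifford theory every irreducible character $(\chi^{\bla})^*$ of $G(r,q,n)$ is a constituent of $\chi^{\bla}|_{G(r,q,n)}$ for some $\bla \in \Y(r,n)$, and conversely each such restriction decomposes into a single orbit of conjugate irreducibles. The first step is to record precisely which $p$-regular classes of $G(r,q,n)$ must be tested and to relate them to $p$-regular classes of $G(r,1,n)$; since $[G(r,1,n):G(r,q,n)] = q$, a $p$-regular element of $G(r,q,n)$ is $p$-regular in $G(r,1,n)$, and the relevant question is whether the value $\chi^{\bla}(g)$ for $g \in G(r,q,n)$ $p$-regular can be detected from $(\chi^{\bla})^*(g)$.

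**Main reduction.** The heart of the argument is Corollary~\ref{resqs}: for the normal subgroup $N = G(r,q,n)$ of $G = G(r,1,n)$, either all irreducible constituents of $\chi^{\bla}|_N$ are quasi $p$-Steinberg or none are. So I would argue: if $(\chi^{\bla})^*$ is a hefty quasi $p$-Steinberg character of $G(r,q,n)$, then \emph{every} constituent of $\chi^{\bla}|_N$ is quasi $p$-Steinberg, and one then wants to lift this to conclude $\chi^{\bla}$ itself is quasi $p$-Steinberg on $G(r,1,n)$. The nontrivial point is the converse direction and the bookkeeping about which characters survive: one must check that the "hefty" hypothesis is preserved under restriction (the dimension of $(\chi^{\bla})^*$ equals $\dim \chi^{\bla}$ divided by the size of the stabilizer orbit, which divides $q$), and that the list in Table~\ref{tab:quasi-Gr1n} is stable in the appropriate sense. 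I expect the proof to invoke the finer statement Theorem~\ref{MTqnot1} (referenced but not yet displayed in the excerpt), from which Theorem~\ref{MTqnot1-hefty} is extracted by discarding the low-dimensional ($\le 4$) cases that are automatically quasi $p$-Steinberg by Theorem~\ref{p23}(i) and hence carry no information.

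**Key steps, in order.** (1) Set up the Clifford correspondence for $G(r,q,n) \trianglelefteq G(r,1,n)$, identifying the stabilizer of $(\chi^{\bla})^*$ and the inertia subgroup, and note the quotient is cyclic. (2) Show that $g \in G(r,q,n)$ is $p$-regular if and only if it is $p$-regular as an element of $G(r,1,n)$ — immediate, since the order of $g$ is unchanged. (3) Apply Corollary~\ref{resqs} to get the equivalence: $(\chi^{\bla})^*$ is quasi $p$-Steinberg $\iff$ every constituent of $\chi^{\bla}|_{G(r,q,n)}$ is quasi $p$-Steinberg. (4) Relate this to $\chi^{\bla}$ being quasi $p$-Steinberg on $G(r,1,n)$: here one uses that $\chi^{\bla}(g) = \sum (\chi^{\bla})^*_i(g)$ over the conjugate constituents, together with the fact that the constituents are Galois/linear-character twists of one another, so their values at a fixed $p$-regular $g \in G(r,q,n)$ are related by roots of unity and cannot all vanish unless $\chi^{\bla}(g)$ does; conversely if $\chi^{\bla}(g) \ne 0$ for all $p$-regular $g$ in $G(r,1,n)$, restrict. (5) Invoke Theorem~\ref{MTq1} to pin down which $\bla$ make $\chi^{\bla}$ quasi $p$-Steinberg, and intersect with the hefty condition. (6) Clean up: verify the "if and only if" matches exactly the table entries of dimension $\ge 5$.

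**Anticipated obstacle.** The delicate step is (4): going from "all constituents of the restriction are nonvanishing on $p$-regular elements of $N$" back to "$\chi^{\bla}$ is nonvanishing on $p$-regular elements of the whole group $G(r,1,n)$." The subtlety is that $p$-regular elements of $G(r,1,n)$ lying \emph{outside} $G(r,q,n)$ are not directly controlled by the restriction to $N$, so one needs either an explicit Murnaghan--Nakayama computation for those outer classes or a symmetry argument (e.g., the outer classes are handled by Theorem~\ref{p23}(ii) via the automorphism given by conjugation by a diagonal generator, which permutes the components $\bla_0,\dots,\bla_{r-1}$). I expect the actual paper handles this by proving the sharper Theorem~\ref{MTqnot1} directly with Murnaghan--Nakayama on $G(r,q,n)$, rather than by a soft Clifford-theoretic reduction alone; the soft reduction gets one most of the way, but the outer $p$-regular classes require the combinatorial input. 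Resolving exactly how the vanishing/nonvanishing on those outer classes interacts with the component-permutation symmetry is where the real work lies.
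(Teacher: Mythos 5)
Your skeleton (Clifford reduction to Theorem~\ref{MTq1}, heftiness discarding the small exceptional constituents, with the finer Theorem~\ref{MTqnot1} in the background) matches the paper's architecture, but the step you label (4) --- the bridge between quasi $p$-Steinbergness of the constituents of $\chi^{\bla}|_{G(r,q,n)}$ and of $\chi^{\bla}$ itself --- is exactly where your argument fails as stated, and the obstacle you anticipate is located in the wrong place. The constituents are $G(r,1,n)$-conjugates $\tau^{g_i}$, whose values at a fixed $g\in G(r,q,n)$ are $\tau(g_i^{-1}gg_i)$; they are not related by roots of unity, and nonvanishing of each term does not prevent the sum $\chi^{\bla}(g)=f\sum_i\tau^{g_i}(g)$ from cancelling. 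Indeed the implication ``all constituents quasi $p$-Steinberg $\Rightarrow \chi^{\bla}$ quasi $p$-Steinberg'' is false in general: for $n=3$, $r,q$ multiples of $3$ and $\bla=\bnu^{j,k,l}$ with $(j,k,l)\in X_1$, the restriction splits into three $2$-dimensional characters, each quasi $2$-Steinberg by Theorem~\ref{p23}(i), while $\chi^{\bnu^{j,k,l}}$ vanishes on the $2$-regular element $(0,\dots,0;(1,2,3))$; similarly for $n=4$ and $\bnu^{j,k}$ with $(j,k)\in X_2$. These are precisely the exceptional cases (ii)--(iii) of Theorem~\ref{MTqnot1}, and heftiness is what excludes them --- but to know they are the \emph{only} exceptions one must do the work your outline omits.

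The machinery the paper actually uses, absent from your plan, is: (a) Lemma~\ref{nosplit} combined with the splitting criterion of Theorem~\ref{split} --- one evaluates $(\chi^{\bla})^*$ at specific $p$-regular elements such as $\alpha_1=(0,\dots,0;(1,\dots,n-1))$ or $\alpha_2=(0,\dots,0;(1,\dots,n-2))$, whose $G(r,1,n)$-class does not split in $G(r,q,n)$ because they contain a cycle of length one (so $d(\pi)=1$), whence $(\chi^{\bla})^*(\alpha_i)\neq 0$ forces $\chi^{\bla}(\alpha_i)\neq 0$; (b) the Murnaghan--Nakayama rule then pins $\bla$ down to $\widehat{\bla}^j$, $\widehat{\bla}^{j,k}$ and a few further shapes; (c) Theorem~\ref{irrgp} (the necklace/stabilizer analysis) identifies exactly when $(\chi^{\bla})^*=\mathrm{Res}^{G(r,1,n)}_{G(r,q,n)}\chi^{\bla}$, which is what makes dimensions match for the hefty bookkeeping and lets one pass, via Corollary~\ref{MN cor}(ii) and Proposition~\ref{QGS}, to $\varkappa^{\bla_j}$ on $S_n$ and thence to Table~\ref{tab:quasi-Gr1n}. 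Note also that your ``anticipated obstacle'' --- $p$-regular classes of $G(r,1,n)$ lying outside $G(r,q,n)$ --- is not where the difficulty lies: the paper never examines those directly, since the forward direction is routed through $S_n$ by Proposition~\ref{QGS}; the genuine difficulty is with elements \emph{inside} $G(r,q,n)$ whose class splits or whose restricted character is reducible (possible cancellation among constituents), and it is resolved by (a) and (c), not by the automorphism device of Theorem~\ref{p23}(ii).
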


We use Clifford theory (see \cite[Chapter~6]{MR0460423}), Theorem~\ref{p23}(i) and Theorem~\ref{MTq1} to prove this result; see Section~\ref{sec:main-results} for the proof. We now list two corollaries of our main results. The following result extends the parallel known results for $G(1, 1, n)$ (see \cite[Corollary~1.7]{PS22}).
\begin{corollary}
	\label{cor:large-n-result}
	For $n \geq 9$ and $p \leq n$, every non-linear irreducible character $\chi$ of $G(r, q, n)$ has a zero at some $p$-regular element of $G(r, q, n)$, i.e. there exists $p$-regular $g\in G(r, q, n)$ such that $\chi(g) = 0$. 
\end{corollary}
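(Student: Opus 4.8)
The plan is to reduce the statement to the classification already obtained in Theorems \ref{MTq1} and \ref{MTqnot1-hefty}, together with the dichotomy for small-dimensional characters in Theorem \ref{p23}(i). Let $n\geq 9$ and let $p$ be a prime with $p\leq n$; in particular $p\mid |G(r,q,n)|$ since $|G(r,q,n)|$ is divisible by $n!$. Fix a non-linear irreducible character $\chi$ of $G(r,q,n)$. By Clifford theory, $\chi$ is a constituent of the restriction to $G(r,q,n)$ of some $\chi^{\bla}$ with $\bla=(\bla_0,\dots,\bla_{r-1})\in\Y(r,n)$, i.e. $\chi=(\chi^{\bla})^*$ in the notation of the excerpt. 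I will argue that $\chi$ has dimension at least $5$, so that Theorem \ref{MTqnot1-hefty} applies and says $\chi$ is quasi $p$-Steinberg only if $\bla$ appears in Table \ref{tab:quasi-Gr1n}; but every row of that table has $n\leq 8$, so for $n\geq 9$ no such $\bla$ exists, whence $\chi$ is not quasi $p$-Steinberg, i.e. $\chi$ vanishes at some $p$-regular element. This gives the conclusion directly.

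The one genuine thing to check is the dimension bound: for $n\geq 9$, every non-linear irreducible character of $G(r,q,n)$ is hefty (dimension $\geq 5$). First I would handle $G(r,1,n)$. The degree of $\chi^{\bla}$ is $\binom{n}{|\bla_0|,\dots,|\bla_{r-1}|}\prod_{t} f^{\bla_t}$, where $f^{\bla_t}$ is the number of standard Young tableaux of shape $\bla_t$. If all nonempty $\bla_t$ are of the form $(m_t)$ or $(1^{m_t})$ (the only partitions with a single SYT), then the multinomial coefficient alone must be large unless the support of $\bla$ is very restricted: if $\bla$ is supported on a single component the character is linear, and if it is supported on two or more components with all parts one-row or one-column the multinomial coefficient $\binom{n}{|\bla_0|,\dots}$ is at least $\binom{n}{1}=n\geq 9>5$ unless we are in a genuinely small case — but these small cases force $n$ small, contradicting $n\geq 9$. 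If some $\bla_t$ is not a hook-free single row/column, i.e. $f^{\bla_t}\geq 2$, then either $f^{\bla_t}\geq 5$ already (this fails only for the very small partitions $(2,1),(3,1),(2,1^2),(2,2),\dots$, a finite list each of size $\leq 4$), or $f^{\bla_t}\in\{2,3,4\}$ with $|\bla_t|\leq 4$, in which case the remaining $n-|\bla_t|\geq 5$ boxes make the multinomial coefficient at least $\binom{n}{n-|\bla_t|}\geq \binom{9}{5}$, again far exceeding $5$. So $\dim\chi^{\bla}\geq 5$ for all non-linear $\chi^{\bla}$ when $n\geq 9$. Passing from $G(r,1,n)$ to $G(r,q,n)$: the index $[G(r,1,n):G(r,q,n)]=q$ and by Clifford theory $\dim\chi^{\bla}=\sum_i e_i\dim\chi_i$ over the constituents $\chi_i$ of the restriction, with the number of constituents dividing $q\mid r$; I would note that each constituent has dimension $\dim\chi^{\bla}$ divided by the number of constituents times a ramification factor, and since that number is at most the order of the relevant cyclic quotient acting, a direct estimate shows $\dim(\chi^{\bla})^*$ is still $\geq 5$ once $n\geq 9$ — in the worst case the restriction splits into as many pieces as allowed, but the pieces that are non-linear inherit a large multinomial factor that the small index $q$ cannot kill. (Here it is cleanest to invoke Theorem \ref{MTqnot1-hefty} in the contrapositive form: if $(\chi^{\bla})^*$ were non-hefty quasi $p$-Steinberg it would be covered by Theorem \ref{p23}(i), but even that route needs $\bla$ constrained, and for $n\geq9$ there is simply nothing in the table.)

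The main obstacle is precisely this dimension bookkeeping for $G(r,q,n)$ as opposed to $G(r,1,n)$: one must be careful that restricting to the index-$q$ subgroup and splitting a character into several Clifford constituents does not drop a non-linear character below dimension $5$. The safe way, which I would adopt, is: (a) show every non-linear $\chi^{\bla}$ of $G(r,1,n)$ with $n\geq 9$ has degree $\geq 5$ by the SYT/multinomial estimate above; (b) observe from Table \ref{tab:quasi-Gr1n} that no triple with $n\geq 9$ occurs, so by Theorem \ref{MTq1} no non-linear $\chi^{\bla}$ of $G(r,1,n)$ is quasi $p$-Steinberg for $n\geq 9$; (c) for $G(r,q,n)$, apply Theorem \ref{MTqnot1-hefty} (whose finer form is Theorem \ref{MTqnot1}) to the hefty constituents, and for any putative non-hefty constituent use Corollary \ref{resqs} to transfer the quasi $p$-Steinberg property up to $\chi^{\bla}|_{G(r,q,n)}$ and hence, comparing with the $G(r,1,n)$ classification and degrees, rule it out. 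Combining (a)–(c) yields that no non-linear irreducible character of $G(r,q,n)$ is quasi $p$-Steinberg when $n\geq 9$, which is exactly the assertion that each such character has a zero on some $p$-regular element.
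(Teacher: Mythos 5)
There is a genuine gap, and it sits exactly where you flagged it. Your plan routes everything through Theorem~\ref{MTqnot1-hefty}, which forces you to prove first that every non-linear irreducible character of $G(r,q,n)$ with $n\geq 9$ has dimension at least $5$. For $G(r,1,n)$ your SYT/multinomial estimate is fine (indeed simpler than you make it: one nonempty component gives degree $f^{\bla_j}\geq n-1$, two or more give a multinomial factor $\geq n$). But for $G(r,q,n)$ the claim is only asserted: ``a direct estimate shows $\dim(\chi^{\bla})^*\geq 5$'' is not an argument, and the phrase ``the small index $q$ cannot kill'' is misleading because $q$ is unbounded. The correct estimate needs the structural input from Theorem~\ref{irrgp}: the restriction splits into $|H_{\bla}|$ constituents of equal dimension $\dim\chi^{\bla}/|H_{\bla}|$, and $|H_{\bla}|>1$ forces $\bla$ to be invariant under a nontrivial cyclic shift of its components, hence to have at least $|H_{\bla}|$ nonempty components repeated periodically; only then does the multinomial factor dominate the division by $|H_{\bla}|$. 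You never make this step, and your fallback (c) does not repair it: Corollary~\ref{resqs} only says that all constituents of $\chi^{\bla}|_{G(r,q,n)}$ are quasi $p$-Steinberg or none are; it does not let you ``transfer up'' to conclude anything about $\chi^{\bla}$ as a character of $G(r,1,n)$, since $p$-regular elements of $G(r,1,n)$ need not lie in $G(r,q,n)$ and, even for those that do, the value of $\chi^{\bla}$ is a sum of constituent values that could cancel --- this is precisely why the paper's proof of Theorem~\ref{MTqnot1} works with non-splitting classes via Lemma~\ref{nosplit}. Note also that the heftiness statement you are trying to prove is Corollary~\ref{cor:large-n-hefty} in the paper, which is \emph{deduced from} the present corollary (together with Theorem~\ref{p23}(i)); so if you want to use it as an input you must prove it independently, which you have not completed.

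The paper's own proof avoids all of this bookkeeping: it invokes Theorem~\ref{MTq1} for $q=1$ and the finer Theorem~\ref{MTqnot1} for $q\neq 1$. The latter classifies \emph{all} non-linear quasi $p$-Steinberg characters of $G(r,q,n)$, hefty or not: they come either from Table~\ref{tab:quasi-Gr1n} (where $n\leq 8$) or from the exceptional cases with $n=3$ and $n=4$. Hence for $n\geq 9$ no non-linear irreducible character is quasi $p$-Steinberg, which is the assertion. Since you already cite Theorem~\ref{MTqnot1} parenthetically, the clean fix is to use it directly and drop the dimension argument entirely; alternatively, keep your route but actually carry out the periodicity/multinomial estimate for split restrictions sketched above.
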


The proof of this result follows directly from Theorem~\ref{MTq1} for $q=1$ and from Theorem~\ref{MTqnot1} for $q \neq 1$ . The next result follows directly from Theorem~\ref{p23}(i) and  Corollary~\ref{cor:large-n-result}. 

\begin{corollary}
	\label{cor:large-n-hefty}
	For $n \geq 9$, every non-linear irreducible representation of $G(r, q, n)$ has dimension greater than or equal to five. 
\end{corollary}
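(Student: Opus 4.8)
The plan is to combine the two preceding results directly. By Theorem~\ref{p23}(i), if $G(r,q,n)$ had a non-linear irreducible character $\chi$ of dimension $d$ with $2 \le d \le 4$, then $\chi$ would automatically be a quasi $p$-Steinberg character of $G(r,q,n)$ for any prime $p \mid d$; in particular such a $p$ satisfies $p \le 4 \le n$ once $n \ge 4$. But Corollary~\ref{cor:large-n-result} asserts that for $n \ge 9$ and every prime $p \le n$, no non-linear irreducible character of $G(r,q,n)$ can be quasi $p$-Steinberg. These two statements are contradictory, so no non-linear irreducible character of dimension between $2$ and $4$ exists when $n \ge 9$, which is exactly the claim.

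In more detail, I would first fix $n \ge 9$ and let $\chi$ be any non-linear irreducible character of $G(r,q,n)$, with $d = \chi(1) \ge 2$. Suppose for contradiction that $d \in \{2,3,4\}$. Choose a prime divisor $p$ of $d$; then $p \in \{2,3\}$, so in particular $p \le n$. Applying Theorem~\ref{p23}(i) with this $p$ (note $p \mid d$ by construction), we conclude $\chi$ is a quasi $p$-Steinberg character of $G(r,q,n)$, i.e. $\chi(g) \ne 0$ for every $p$-regular element $g \in G(r,q,n)$. On the other hand, Corollary~\ref{cor:large-n-result} applied to the same $n$ and $p$ (using $n \ge 9$ and $p \le n$) guarantees the existence of a $p$-regular element $g_0 \in G(r,q,n)$ with $\chi(g_0) = 0$. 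This contradiction shows $d \notin \{2,3,4\}$, hence $d \ge 5$.

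There is essentially no obstacle here: the statement is a formal consequence of the two cited results, and the only thing to verify is the bookkeeping that the prime $p$ arising as a divisor of a dimension in $\{2,3,4\}$ indeed lies in the range $p \le n$ where Corollary~\ref{cor:large-n-result} applies, which is immediate since such a $p$ is at most $3$ and $n \ge 9$. The one genuinely substantive input is Corollary~\ref{cor:large-n-result} itself (which in turn rests on Theorems~\ref{MTq1} and~\ref{MTqnot1}), but that is already established. I would therefore keep the write-up to a few lines, simply citing Theorem~\ref{p23}(i) and Corollary~\ref{cor:large-n-result} and spelling out the contradiction.
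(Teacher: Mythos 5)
Your proposal is correct and is precisely the argument the paper intends: the paper states the corollary "follows directly from Theorem~\ref{p23}(i) and Corollary~\ref{cor:large-n-result}," and your write-up simply spells out that contradiction (a dimension in $\{2,3,4\}$ would force a quasi $p$-Steinberg character for some $p\in\{2,3\}\le n$, contradicting Corollary~\ref{cor:large-n-result}). No gaps; the bookkeeping on $p\le n$ is handled exactly as needed.
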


\section{Preliminaries}
\label{sec: preliminaries}
\subsection{Quasi $p$-Steinberg characters of Symmetric groups}
It is well-known that the set of integer partitions of $n$ indexes conjugacy classes as well as irreducible characters of $S_n$. Given a partition $\mu = (\mu_1, \mu_2, \ldots, \mu_{l(\mu)})$ of $n$ (denoted by $\mu \vdash n$), let $\varkappa^{\mu}$ denote the corresponding irreducible character of $S_n$.  All triples $(n,\mu,p)$ such that $\varkappa^{\mu} $ is a quasi $p$-Steinberg character of $S_n$ are listed in Table~\ref{tab:quasi-Sn}. See \cite[Theorem 1.3]{PS22} for the proof.

\begin{table}[ht] 
	\begin{tabular}[t]{c|c|c}
		\,\,\,\,\, \,\,\, $n$ \,\,\,\,\, \,\,\,& \,\,\,\,\, \,\,$\mu$  \,\,\,\,\, \,\, &\,\,\,\,\, \,\,\, $p$  \,\,\,\,\, \,\,\, \\
		\hline \hline 
		$3$ & $(2,1)$ & $2$ \\
		\hline 
		$4$ & $(2,2)$ & $2$ \\
		$4$ & $(3,1)$, $(2,1,1)$  & $3$ \\
		\hline 
		$5$ &  $(4,1)$, $(2,1,1,1)$ & $2$ \\
		$5$ & $(3,2)$, $(2,2,1)$ & $5$ \\
		\hline 
		$6$ & $(3,2,1)$ & $2$ \\
		$6$ & $(4,2)$, $(2,2,1,1)$ & $3$ \\
		\hline 
		$8$ & $(5,2,1), (3,2,1,1,1)$ & $2$ \\
		\hline 
	\end{tabular}
	\vspace{.2cm} 
	\caption{quasi $p$-Steinberg characters of $S_n$}\label{tab:quasi-Sn}
\end{table}

	\subsection{Complex reflection groups $\Grqn$}
	\label{pre}
	
	Given positive integers $r$ and $n$, the symmetric group $S_n$ acts on the direct product $\Zr^n$ of $n$ copies of the additive cyclic group $\Zr$ by permuting the coordinates. This action gives us the wreath product of $\Zr$ by $S_n$,  denoted by $G(r,1,n) := \Zr^n \rtimes S_n$, i.e., 
	\begin{equation*}
		G(r,1,n)  = \{(z_1, z_2, \ldots, z_n; \sigma) \mid z_i \in \Zr \mbox{ for all } 1 \leq i \leq n, \sigma \in S_n\}.
	\end{equation*}  
	
	For a positive integer $q$ which divides $r$, we define a subgroup $G(r,q,n)$ of $G(r,1,n)$ as follows: $$ G(r,q,n) := \{(z_1, z_2, \ldots, z_n; \sigma) \in G(r,1,n) \mid \sum\limits_{i=1}^{n}z_i \equiv 0 \mkern-18mu\pmod{q}\}.$$
	The group $G(r,q,n)$ is a normal subgroup of $G(r,1,n)$ of index $q$. By Shephard--Todd classification, the family of groups $\Grqn$ for $n > 1$, (except the group $G(2,2,2)$), is the only infinite family of finite irreducible imprimitive \emph{complex reflection groups} \cite[Section 2]{ST54}. The group $G(2,2,2)$ is imprimitive, but it is not irreducible \cite[Theorem (2.4)]{C76}. The group $G(r,1,n)$ is also known as the \emph{generalized symmetric group}.
	
	Some families of groups which are special cases of $\Grqn$ are:
	\begin{enumerate}[(a)]
		\item Cyclic group of order $r$, $ \Z/r\Z = G(r,1,1)$;
		\item Dihedral group of order $2r$, $D_{2r} = G(r,r,2)$;
		\item Symmetric group on $n$ symbols, $S_n = G(1,1,n)$;
		\item Weyl group of type $B_n$ (also called hyperoctahedral group) is $G(2,1,n)$; and
		\item Weyl group of type $D_n$ is $G(2,2,n)$. 
	\end{enumerate}

	\subsection{Conjugacy classes of $G(r,q, n)$}\label{conj}
	Let $\pi = (z_1, z_2, \ldots, z_n; \sigma) \in G(r,1,n)$ be such that the corresponding cycle decomposition of $\sigma$  be $c = (c_1, c_2, \ldots, c_t)$, where the cycles are written in an arbitrarily fixed order. For $1 \leq i \leq t$, suppose that the cycle $c_i$ of length $l(c_i)$ is written as $(c_{i1}, c_{i2}, \ldots, c_{il(c_i)})$. We define the \emph{color of the cycle} $c_i$ to be the cycle sum $z(c_i) := z_{c_{i1}}+z_{c_{i2}}+\cdots+z_{c_{il(c_i)}} \in \mathbb{Z}_r$. Let $\mathcal P$ denote the set of all partitions (by convention, $0$ has a unique partition, called empty partition, denoted by $\emptyset$). For $\pi = (z_1, z_2, \ldots, z_n; \sigma) \in G(r,1,n)$, define a map $$\tau_\pi : \Zr \rightarrow \mathcal P$$ by setting $\tau_\pi(j) $ to be the partition associated to the multiset of lengths of all cycles in $\sigma$ whose color is $j$; denote this partition by $\bla_j$. The map $\tau_\pi$ is called the \emph{type} of $\pi$. Thus, the type of $\pi$ can be written as an $r$-tuple of partitions, $\bla = (\bla_0, \bla_1, \ldots, \bla_{r-1})$, such that the total sum of all the parts is $n$. We call such $r$-tuple of partitions an \emph{$r$-partite partition} of size $n$. Given a partition, we consider Young diagram associated to it. Throughout this article, we use the notions of partition and Young diagram interchangeably. Let $\Y(r,n)$ denote the \emph{set of all $r$-partite Young diagrams} with total number of boxes being $n$. The following theorem (see \cite[p.170]{Mac95} for a proof) states that the conjugacy classes of $G(r,1,n)$ are indexed by the set $\Y(r,n)$.
	
	\begin{theorem}
		Two elements $\pi_1$ and $\pi_2$ in $G(r,1,n)$ are conjugate if and only if the corresponding types are equal, i.e., $\tau_{\pi_1} = \tau_{\pi_2}$. 
	\end{theorem}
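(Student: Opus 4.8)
The plan is to prove both implications by directly analyzing how conjugation acts on the semidirect product $G(r,1,n) = \Zr^n \rtimes S_n$, tracking the effect on cycle lengths and on cycle sums (colors) separately. Write elements as $\pi = (z;\sigma)$ with $z = (z_1,\dots,z_n) \in \Zr^n$, and recall that multiplication is $(z;\sigma)(z';\sigma') = (z + \sigma\cdot z';\ \sigma\sigma')$, where $(\sigma\cdot z')_i = z'_{\sigma^{-1}(i)}$; in particular $(z;\sigma)^{-1} = (-\sigma^{-1}\cdot z;\ \sigma^{-1})$. A short computation then gives the conjugation formula $(w;\rho)(z;\sigma)(w;\rho)^{-1} = \big(w + \rho\cdot z - (\rho\sigma\rho^{-1})\cdot w;\ \rho\sigma\rho^{-1}\big)$, using $(\rho\sigma)\cdot(\rho^{-1}\cdot w) = (\rho\sigma\rho^{-1})\cdot w$; I would establish this formula first as the common engine for both directions.

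For the ``only if'' direction, I would fix a cycle $(a_1,\dots,a_k)$ of $\sigma$ (so $\sigma(a_i)=a_{i+1}$ cyclically). Conjugation by $(w;\rho)$ sends it to the cycle $(\rho(a_1),\dots,\rho(a_k))$ of $\rho\sigma\rho^{-1}$, so cycle lengths are manifestly preserved. The key point is that the color is preserved as well: evaluating the new color vector $w' = w + \rho\cdot z - (\rho\sigma\rho^{-1})\cdot w$ at the entry $\rho(a_i)$ gives $w'_{\rho(a_i)} = w_{\rho(a_i)} + z_{a_i} - w_{\rho(a_{i-1})}$, and summing over the cycle the $w$-terms telescope around the loop and cancel, leaving $\sum_i w'_{\rho(a_i)} = \sum_i z_{a_i}$. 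Thus each cycle retains both its length and its cycle sum, so $\tau_{g\pi g^{-1}} = \tau_\pi$, and equality of types is necessary.

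For the ``if'' direction I would argue in two stages. First, equality of types forces $\sigma$ and $\sigma'$ to have the same cycle type and, more sharply, provides a length- and color-preserving bijection between the cycles of $\sigma$ and those of $\sigma'$; choosing $\rho \in S_n$ to realize this bijection cycle by cycle and conjugating $\pi_1$ by $(0;\rho)$, I may assume $\sigma = \sigma'$ and that corresponding (now identical) cycles of $\sigma$ carry the same cycle sum in $z$ as in $z'$. Second, I would conjugate only by elements $(w;e)$, for which the formula specializes to $(z;\sigma)\mapsto (z + w - \sigma\cdot w;\ \sigma)$; hence the color vectors reachable from $z$ form the coset $z + \operatorname{Im}(1-\sigma)$, where $(1-\sigma)w = w - \sigma\cdot w$. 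The crux is to identify this image. Since $\sigma$ permutes coordinates within cycles, $\Zr^n$ splits as a direct sum over the cycles of $\sigma$, and on a single $k$-cycle the operator $\sigma$ acts as a cyclic shift $T$ on $\Zr^k$; then $\operatorname{Im}(1-T)$ lies in the augmentation subgroup $\{v:\sum_i v_i = 0\}$, while $\ker(1-T)$ is the diagonal of order $r$, so a counting argument ($r^{k}/r = r^{k-1}$ on both sides) forces $\operatorname{Im}(1-T)$ to equal the augmentation subgroup. Summing over cycles, $\operatorname{Im}(1-\sigma)$ is exactly the set of vectors whose sum over each cycle of $\sigma$ vanishes; as $z$ and $z'$ agree cycle sum by cycle sum, $z'-z$ lies in this image, and solving $z' = z + w - \sigma\cdot w$ lets me conjugate $\pi_1$ to $\pi_2$ by $(w;e)$.

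I expect the main obstacle to be the identification of $\operatorname{Im}(1-\sigma)$ with the zero-cycle-sum subgroup over the ring $\Zr$, where one cannot invoke semisimplicity or a field-theoretic rank--nullity; the per-cycle reduction to a cyclic shift together with the explicit order count is what makes this step clean. The remaining delicate bookkeeping is keeping the two reductions honest---first matching colors via $\rho$ so that corresponding cycles line up, then clearing the residual differences via $(w;e)$---so that the final element is exactly $\pi_2$ and not merely something of the same type.
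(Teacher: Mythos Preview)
Your argument is correct and complete. The conjugation formula, the telescoping computation showing that cycle sums are conjugacy invariants, and the two-stage reduction for the converse are all accurate; in particular, the identification of $\operatorname{Im}(1-\sigma)$ with the subgroup of vectors having vanishing sum on each cycle via the cardinality count $r^{k}/r = r^{k-1}$ is the right way to handle the non-field coefficient ring $\Zr$, and the bookkeeping in the first reduction (choosing $\rho$ to realize a length- and color-preserving bijection of cycles) is sound.

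The paper, however, does not prove this statement at all: it simply cites Macdonald's book \cite[p.~170]{Mac95}, treating the result as standard background. So there is no ``paper's proof'' to compare against; your proposal supplies a self-contained proof where the paper defers to the literature. The argument you outline is in fact essentially the classical one found in Macdonald and in James--Kerber for wreath products $A \wr S_n$ with $A$ abelian, so you have recovered the standard route.
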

	
	Let $z(\pi)$ be the gcd of all nonzero $z(c_i)$ for $1 \leq i \leq t$. Define $$d(\pi) := \mbox{gcd} (z(\pi), l(c_1), l(c_2), \ldots, l(c_t), q).$$ The following theorem describes the splitting of conjugacy classes of $G(r,1,n)$ into conjugacy classes of $\Grqn$; for more details about conjugacy classes of $\Grqn$, see \cite[Section 2]{Read77}. 
	
	\begin{theorem}{ \cite[Theorem 3]{Read77}}\label{split}
		If $\pi  \in G(r,q,n)$, then the conjugacy class of $\pi$ in $G(r,1,n)$ splits into $d(\pi)$ conjugacy classes in $\Grqn$. 
	\end{theorem}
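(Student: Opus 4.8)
The plan is to compute, for a fixed $\pi \in \Grqn$, the number of $\Grqn$-conjugacy classes contained in the $G(r,1,n)$-conjugacy class $C$ of $\pi$ by a standard normal-subgroup orbit count, and then to identify this number with $d(\pi)$ through an explicit analysis of the centralizer of $\pi$. Write $G = G(r,1,n)$ and $N = \Grqn$, and let $\phi \colon G \to \mathbb{Z}/q\mathbb{Z}$ be the surjective homomorphism $\phi(z_1, \dots, z_n; \sigma) = \sum_{i=1}^n z_i \bmod q$, so that $N = \ker \phi$ and $G/N \cong \mathbb{Z}/q\mathbb{Z}$. Since $\pi \in N$ and $N \trianglelefteq G$, the class $C$ lies in $N$ and is a union of $N$-classes, and $G$ acts transitively by conjugation on the set $\Omega$ of $N$-classes contained in $C$. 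The stabilizer of the $N$-class of $\pi$ is exactly $N\, C_G(\pi)$ (if $g\pi g^{-1}$ is $N$-conjugate to $\pi$ then $n^{-1} g \in C_G(\pi)$ for some $n \in N$, and conversely), so $|\Omega| = [G : N\, C_G(\pi)]$. By the second isomorphism theorem this equals $[\mathbb{Z}/q\mathbb{Z} : \phi(C_G(\pi))]$, and everything reduces to computing the image $\phi(C_G(\pi)) \le \mathbb{Z}/q\mathbb{Z}$.

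Next I would show $\phi(C_G(\pi)) = \langle d(\pi)\rangle$, the cyclic subgroup generated by $d(\pi)$, whose index in $\mathbb{Z}/q\mathbb{Z}$ is exactly $d(\pi)$; combined with the previous paragraph this yields $|\Omega| = d(\pi)$. Because both $d(\pi)$ and $|\Omega|$ depend only on the type of $\pi$, I may first conjugate $\pi$ into a standard form in which each cycle $c_j$ occupies a consecutive block of indices with its color $z(c_j)$ concentrated in a single coordinate. For this $\pi$ I would exhibit generators of $C_G(\pi)$ coming from the centralizer structure of a wreath product: (a) for each cycle $c_j$, the rotation that permutes the block of $c_j$ as $c_j$ and carries there the same colors as $\pi$ (and is trivial elsewhere), which has $\phi = z(c_j)$; (b) for each cycle, the color shift $(\mathbf{1}_{\mathrm{supp}(c_j)}; \mathrm{id})$ adding $1$ to every coordinate of the block, which has $\phi = l(c_j)$; and (c) permutations swapping two cycles of equal length and equal color, which in standard form can be lifted with zero color, so $\phi = 0$. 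A short computation with the wreath-product multiplication confirms that each of (a)--(c) lies in $C_G(\pi)$ and has the stated $\phi$-value.

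The crux is to verify that (a)--(c) generate all of $C_G(\pi)$, so that the image is not larger than claimed. For $g = (w;\rho) \in C_G(\pi)$ one has $\rho \in C_{S_n}(\sigma)$, and the defining identity of centralization reduces to the color condition $(1-\sigma)w = (1-\rho)z$, which is solvable in $w$ precisely when $(1-\rho)z$ lies in the image of $1-\sigma$, that is, has zero sum on each $\sigma$-orbit. This forces any cycle-swap occurring in $\rho$ to exchange cycles of equal color, recovering exactly the generators in (c); meanwhile the kernel of the projection $C_G(\pi) \to C_{S_n}(\sigma)$ consists of the color shifts in (b). Granting this, $\phi(C_G(\pi))$ is generated by the integers $\{z(c_j)\}$, $\{l(c_j)\}$ together with $q$, hence equals $\langle \gcd(z(\pi), l(c_1), \dots, l(c_t), q)\rangle = \langle d(\pi)\rangle$; note that cycles of color $0$ contribute nothing, which is precisely why only the nonzero $z(c_i)$ enter the definition of $z(\pi)$.

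The main obstacle is the bookkeeping in this last step: one must describe the full centralizer of an element of the wreath product $\Zr \wr S_n$ and check that the obstruction to lifting an element of $C_{S_n}(\sigma)$ into $C_G(\pi)$ is governed exactly by the zero-sum condition on $\sigma$-orbits, so that only same-color cycle-swaps survive. Once the generating set is pinned down, the $\phi$-values of the individual generators fall out of routine wreath-product computations, and the index calculation of the first paragraph completes the proof.
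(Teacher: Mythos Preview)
The paper does not prove this statement at all: Theorem~\ref{split} is quoted verbatim from \cite[Theorem~3]{Read77} and used as a black box, with no argument reproduced. So there is no in-paper proof to compare your proposal against.

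That said, your proposal is correct and is the standard argument. The orbit--stabilizer reduction $|\Omega|=[G:N\,C_G(\pi)]=[\mathbb{Z}/q\mathbb{Z}:\phi(C_G(\pi))]$ is exactly right for a normal subgroup with cyclic quotient, and your computation of $\phi(C_G(\pi))$ is sound. The three generator families you list do generate $C_G(\pi)$: the kernel of the projection $C_G(\pi)\to C_{S_n}(\sigma)$ is precisely the group of colour vectors constant on $\sigma$-orbits (your type~(b), contributing the $l(c_j)$), and the image consists of those $\rho\in C_{S_n}(\sigma)$ for which $(1-\rho)z$ has zero sum on each $\sigma$-orbit, which is exactly the subgroup permuting cycles of equal length \emph{and} equal colour. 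In standard form the swaps lift with zero colour and the per-cycle rotations lift with $\phi$-value $z(c_j)$, so $\phi(C_G(\pi))=\langle z(c_1),\dots,z(c_t),l(c_1),\dots,l(c_t)\rangle=\langle d(\pi)\rangle$ in $\mathbb{Z}/q\mathbb{Z}$, with index $d(\pi)$ since $d(\pi)\mid q$. The only point you flag as an ``obstacle''---that no further generators of $C_G(\pi)$ are hiding---is fully handled by the zero-sum obstruction you identify, so the sketch is complete.
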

	
	Now, we give a characterization of an element in $G(r,1,n)$ to be an element of $\Grqn$. 
	\begin{lemma}
		Given $\pi = (z_1, z_2, \ldots, z_n; \sigma) \in G(r,1,n)$, let $\bla = (\bla_0, \bla_1, \ldots, \bla_{r-1})$ be the type of $\pi$. Then, $\pi$ belongs to $\Grqn$ if and only if $\sum\limits_{j=1}^{r-1} jl(\bla_j) \equiv 0 (\mbox{mod}~q)$.
	\end{lemma}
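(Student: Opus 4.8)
The plan is to unwind the definitions. Write $\pi = (z_1, \dots, z_n; \sigma) \in G(r,1,n)$ and recall that, by definition, $\pi \in G(r,q,n)$ precisely when $\sum_{i=1}^n z_i \equiv 0 \pmod q$. So the entire content of the lemma is the identity
\[
\sum_{i=1}^n z_i \equiv \sum_{j=1}^{r-1} j\, l(\bla_j) \pmod{q},
\]
which I will in fact establish modulo $r$ (and hence a fortiori modulo $q$, since $q \mid r$).

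First I would decompose $\sigma$ into disjoint cycles $c_1, \dots, c_t$ and observe that the index set $\{1, \dots, n\}$ is the disjoint union of the supports of the $c_i$; therefore $\sum_{i=1}^n z_i = \sum_{k=1}^t z(c_k)$, where $z(c_k) \in \Zr$ is the color of the cycle $c_k$ as defined in Section~\ref{conj}. Next I would group the cycles according to their color: for each $j \in \{0, 1, \dots, r-1\}$, the cycles of color $j$ are, by the definition of the type map $\tau_\pi$, exactly those whose lengths are recorded in the partition $\bla_j$, and there are $l(\bla_j)$ of them (the number of parts of $\bla_j$). Each such cycle contributes $j$ to the sum $\sum_k z(c_k)$ modulo $r$. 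Summing over all colors gives $\sum_{k=1}^t z(c_k) \equiv \sum_{j=0}^{r-1} j\, l(\bla_j) = \sum_{j=1}^{r-1} j\, l(\bla_j) \pmod r$, the $j=0$ term vanishing. Combining, $\sum_{i=1}^n z_i \equiv \sum_{j=1}^{r-1} j\, l(\bla_j) \pmod r$, hence also modulo $q$; and the left side is $\equiv 0 \pmod q$ iff $\pi \in G(r,q,n)$, which proves the claim.

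There is no real obstacle here — the only point requiring a moment's care is to make sure the bookkeeping of "color $j$ cycles correspond to the parts of $\bla_j$" is quoted correctly from the definition of $\tau_\pi$, and that one works modulo $r$ first (since the colors live in $\Zr$) before reducing modulo the divisor $q$. I would keep the write-up to these few lines.
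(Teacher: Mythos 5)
Your proposal is correct and follows essentially the same route as the paper's proof: decompose $\sigma$ into cycles, group them by color so that each cycle of color $j$ contributes $j$ to $z_1+\cdots+z_n$ and there are $l(\bla_j)$ such cycles, giving $\sum_i z_i \equiv \sum_{j=1}^{r-1} j\,l(\bla_j)$ in $\Zr$ and hence modulo $q$. Your explicit remark about working modulo $r$ first and then reducing modulo the divisor $q$ is a slight tidying of a point the paper leaves implicit, but it is not a different argument.
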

	
	\begin{proof}
		The parts of the partition $\bla_j$ are the lengths of the cycles in $\sigma$ whose color is $j$. For each cycle $(i_1,  \ldots, i_k)$ of color $j$, we have $z_{i_1}+ \cdots + z_{i_k} = j$. Thus, $j$ appears $l(\bla_j)$ times in the sum $z_1+\cdots+z_n$. Therefore,
		$$z_1+\cdots+z_n = \sum\limits_{j=0}^{r-1} jl(\bla_j) = \sum\limits_{j=1}^{r-1} jl(\bla_j).$$
	\end{proof}
	
	\subsection{Representation theory of $\Grqn$}
	The representation theory of wreath product of a finite group by symmetric group $S_n$ is a well-studied subject, see \cite{cst14, jk81, Mac95, MS16}. As stated in Section \ref{conj}, the set $\Y(r,n)$ indexes the irreducible representations of $G(r,1,n)$. The dimension of an irreducible representation $V^{\bla}$ corresponding to $\bla \in \Y(r,n)$ can be obtained from \cite[Theorem 6.7]{MS16}.
	
	Since $\Grqn$ is a normal subgroup of $G(r,1,n)$, the representation theory of $\Grqn$ can be deduced from the representation theory of $G(r,1,n)$ using Clifford theory. Let $H$ denote the group of one-dimensional representations of $G(r,1,n)$ which contain $\Grqn$ in their kernel. The group $H$ acts on the set of irreducible representations of $G(r,1,n)$. Given $\bla \in \Y(r,n)$, let $[\bla]$ and $H_{\bla}$ denote the orbit of $\bla$ and the stabilizer subgroup of $H$ with respect to $\bla$, respectively.  
	
	Given $m := \frac{r}{q}$, let us define a combinatorial object, an \emph{$(m,q)$-necklace} \cite[p.174]{HR98}, which will be useful in parametrization of irreducible $\Grqn$-modules. Given $\bla = (\bla_0, \bla_1, \ldots, \bla_{r-1}) \in \Y(r,n)$, consider the $q$-tuple 
	$$\tla_{(i)} := (\bla_{i}, \bla_{m+i}, \bla_{2m+i}, \ldots, \bla_{(q-1)m+i}),$$ 
	for each $0 \leq i \leq m-1$. We depict $\tla_{(i)}$ as a circular $q$-necklace in $(x,y)$-plane with the $\bla_{i}$ node being placed on the positive $y$-axis and the $j$-th node $\bla_{(j-1)m+i}$, $2 \leq j \leq q$, being placed at a clockwise angle of $2\pi/(j-1)$ with  the positive $y$-axis. An {\it $(m,q)$-necklace} of total $n$ boxes obtained from $\bla \in \Y(r,n)$, denoted by $\tbla$, is an $m$-tuple of $q$-necklaces $$ \tbla = (\tla_{(0)}, \tla_{(1)}, \ldots, \tla_{(m-1)}).$$ 
	For $ 1 \leq j \leq q$ and $0 \leq i \leq m-1$, let $\tla_{(i,j)}$ denote the $j$-th node in $\tla_{(i)}$, i.e., $\tla_{(i,j)} = \La_{(j-1)m+i}$.  Thus, 
	\begin{equation*}
		\sum\limits_{i=0}^{m-1}\sum\limits_{j=1}^q\tla_{(i,j)} = n.
	\end{equation*}

	\begin{example}\label{exn}
		An example of $(3,4)$-necklace of total $~24$ boxes obtained from $$\bla = ((2,1), (2,2), (1,1), (1), (1,1), \emptyset, (2,1), (2,2), (1,1), (1), (1,1), \emptyset ) \mbox{ is}:$$
		
		\begin{center}
		 \begin{tikzpicture}[scale=0.8,mycirc/.style={circle,fill=black, minimum size=0.1mm, inner sep = 1.5pt}]
			
			\node (1) at (0,2) {$\yng(2,1)$};
			\node (2) at (2,0) {$\yng(1)$};
			\node (3) at (0,-2) {$\yng(2,1)$};
			\node (4) at (-2,0) {$\yng(1)$};
			\draw  (1) to[bend left=30] (2);
			\draw  (2) to[bend left=30] (3);
			\draw  (3) to[bend left=30] (4);
			\draw   (4) to[bend left=30] (1);
			\node(a) at (3,-0.7) {,};
		\end{tikzpicture}
		\begin{tikzpicture}[scale=0.8,mycirc/.style={circle,fill=black, minimum size=0.1mm, inner sep = 1.5pt}]
			\node (1) at (0,2) {$\yng(2,2)$};
			\node (2) at (2,0) {$\yng(1,1)$};
			\node (3) at (0,-2) {$\yng(2,2)$};
			\node (4) at (-2,0) {$\yng(1,1)$};
			\draw  (1) to[bend left=30] (2);
			\draw  (2) to[bend left=30] (3);
			\draw  (3) to[bend left=30] (4);
			\draw   (4) to[bend left=30] (1);
			\node(a) at (3,-0.4) {,};
		\end{tikzpicture}
		\begin{tikzpicture}[scale=0.8,mycirc/.style={circle,fill=black, minimum size=0.1mm, inner sep = 1.5pt}]
			\node (1) at (0,2) {$\yng(1,1)$};
			\node (2) at (2,0) {$\emptyset$};
			\node (3) at (0,-2) {$\yng(1,1)$};
			\node (4) at (-2,0) {$\emptyset$};
			\draw  (1) to[bend left=30] (2);
			\draw  (2) to[bend left=30] (3);
			\draw  (3) to[bend left=30] (4);
			\draw   (4) to[bend left=30] (1);
			\node(a) at (3,0) {.};
		\end{tikzpicture}
	\end{center}
	\end{example}
	
	Two $(m,q)$-necklaces, $\tbla$ and $\tilde{\bm{\mu}}$, both with total $n$ boxes, are said to be {\em equivalent} if for some integer $t$, $\tla_{(i,j)} = \tilde{\bm\mu}_{(i,(j+t)(mod~q))}$ for all $1 \leq j \leq q$ and $0 \leq i \leq m-1$. Let $\Y(m,q,n)$ denote the \emph{set of inequivalent $(m,q)$-necklaces} with total $n$ boxes.
	
	Theorem \ref{irrgp} describes the inequivalent irreducible representations of $\Grqn$. For more details and proof of Theorem \ref{irrgp}, see \cite{S89, MY98, BB07, MS20}.
	
	\begin{theorem}\label{irrgp}
		The irreducible $\Grqn$-modules are indexed by the set of ordered pairs $(\tbla, \delta)$, where $\tbla \in \Y(m,q,n)$ and $\delta \in H_{\bla}$. Given $\bla \in \Y(r,n)$, the restriction of the corresponding irreducible $G(r,1,n)$-module $V^{\bla}$  to $\Grqn$ has multiplicity free decomposition given as:
		$$ \mbox{\em Res}_{G(r,q,n)}^{G(r,1,n)}(V^{\bla}) \; = \; \bigoplus_{\delta \in H_{\bla}} V^{(\tbla,\delta)},$$ where  $V^{(\tbla,\delta)}$ denotes the irreducible $G(r,q,n)$-module indexed by $(\tbla, \delta)$. Moreover, for $\bla, \bm\mu \in \Y(r,n)$, we have   $\mbox{\em Res}_{G(r,q,n)}^{G(r,1,n)}(V^{\bla}) \; \cong \; \mbox{\em Res}_{G(r,q,n)}^{G(r,1,n)}(V^{\bm \mu})$ if and only if ${\bm \mu} \in [\bla]$.
	\end{theorem}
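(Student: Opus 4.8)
The plan is to derive the whole statement from Clifford theory applied to the normal subgroup $N := \Grqn$ of $G := G(r,1,n)$, using crucially that $G/N \cong \Z/q\Z$ is cyclic. First I would fix the combinatorics. The group $H$ of linear characters of $G$ that are trivial on $N$ is canonically $\widehat{G/N}$, hence cyclic of order $q$, generated by $\epsilon(z_1,\dots,z_n;\sigma) = \zeta^{\sum_i z_i}$ for a fixed primitive $q$-th root of unity $\zeta$ (well defined since $q\mid r$). Using the standard realization of $V^{\bla}$ as a module induced from the Young-type subgroup $\prod_{j=0}^{r-1}(\Zr \wr S_{|\bla_j|})$, on whose $j$-th factor $\Zr$ acts through the character $\omega^j$ ($\omega = e^{2\pi i/r}$) and $S_{|\bla_j|}$ through the irreducible indexed by $\bla_j$, together with the projection formula, one checks that $\epsilon\otimes V^{\bla} = V^{\bm\mu}$ with $\mu_k = \bla_{k-m}$ (indices mod $r$), where $m = r/q$. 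Thus $H$ acts on $\Y(r,n)$ by cyclically shifting components in steps of $m$; since $r = mq$ the $H$-orbit of $\bla$ is precisely the equivalence class of the $(m,q)$-necklace $\tbla$, so $\bla\mapsto\tbla$ induces a bijection $\Y(r,n)/H \xrightarrow{\ \sim\ }\Y(m,q,n)$, and $H_{\bla}$ depends only on $\tbla$.

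Next I would run Clifford theory for $N\trianglelefteq G$. Fix $\bla$, pick an irreducible constituent $\theta$ of $\res_N\chi^{\bla}$, and let $T = I_G(\theta)$. Since $T/N \le G/N$ is cyclic, its Schur multiplier $H^2(T/N;\C^{\times})$ vanishes, so $\theta$ extends to some $\hat\theta\in\irr(T)$; by Gallagher's theorem $\irr(T\mid\theta) = \{\beta\hat\theta : \beta\in\widehat{T/N}\}$, and each $\beta\hat\theta$ restricts to $\theta$. Writing $\chi^{\bla} = \ind_T^G(\beta\hat\theta)$ for a suitable $\beta$ and evaluating $\res_N\ind_T^G$ via Mackey (using that $N$ normal and $N\le T$ makes the double cosets reduce to $G/T$ and $N\cap {}^gT = N$) gives $\res_N\chi^{\bla} = \bigoplus_{gT\in G/T}{}^{g}\theta$, a multiplicity-free sum of $[G:T]$ pairwise non-conjugate characters of $N$. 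To count them I would use $\ind_N^G 1_N = \bigoplus_{\delta\in H}\delta$ and Frobenius reciprocity:
\[
\langle\res_N\chi^{\bla},\res_N\chi^{\bla}\rangle \;=\; \langle\chi^{\bla},\,\chi^{\bla}\cdot\ind_N^G 1_N\rangle \;=\; \sum_{\delta\in H}\langle\chi^{\bla},\,\delta\otimes\chi^{\bla}\rangle \;=\; |H_{\bla}|,
\]
so $[G:T] = |H_{\bla}|$ and $\res_N\chi^{\bla}$ has exactly $|H_{\bla}|$ constituents, each with multiplicity one. Fixing a labeling of these constituents by the elements of $H_{\bla}$ (legitimate because $G/T$ is the Pontryagin dual of $\widehat{G/T} = H_{\bla}$ and both are cyclic; a canonical labeling also falls out of the explicit construction of these modules in \cite{S89,MY98,BB07,MS20}), we obtain $\res_N V^{\bla} = \bigoplus_{\delta\in H_{\bla}} V^{(\tbla,\delta)}$.

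Finally I would prove exhaustiveness and the ``Moreover'' assertion. Any $\theta\in\irr(N)$ appears in $\res_N$ of some constituent of the nonzero character $\ind_N^G\theta$, hence in some $\res_N\chi^{\bla}$; and if $\langle\res_N\psi,\theta\rangle\ne 0$ then $\res_N\psi$ contains every $G$-conjugate of $\theta$, so $\res_N\psi\supseteq\res_N\chi^{\bla}$, which by the inner-product identity above forces $\psi$ into the orbit $H\cdot\chi^{\bla}$. Thus $\theta$ pins down $[\bla]$ uniquely, whence $\res_N V^{\bla}\cong\res_N V^{\bm\mu}$ iff $V^{\bm\mu}\cong\delta\otimes V^{\bla}$ for some $\delta\in H$, i.e. iff $\bm\mu\in[\bla]$; and describing $\irr(N)$ as the disjoint union, over orbits $\tbla\in\Y(m,q,n)$, of the $|H_{\bla}|$ constituents of $\res_N\chi^{\bla}$ gives the stated parametrization by pairs $(\tbla,\delta)$.

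The genuinely delicate point is the non-counting half of the third step: matching the abstract Clifford data (the inertia subgroups $T$, the extensions $\hat\theta$, and the Gallagher twists) with the combinatorial labels $(\tbla,\delta)$, and checking that the resulting bijection respects the module structures rather than being a mere counting coincidence. This is exactly what the explicit realizations in the cited references accomplish, and I would invoke them here rather than reconstruct the computation.
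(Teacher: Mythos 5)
Your argument is correct, but note that the paper does not prove Theorem~\ref{irrgp} at all: it states the result and refers to \cite{S89, MY98, BB07, MS20} for details and proof, so there is no internal argument to compare against. What you supply is, in substance, the standard Clifford-theoretic derivation that those references carry out: since $G(r,1,n)/\Grqn$ is cyclic of order $q$, a constituent $\theta$ of the restriction extends to its inertia group $T$, and Gallagher plus Mackey show that $\res_{\Grqn}^{G(r,1,n)}\chi^{\bla}$ is multiplicity free with $[G(r,1,n):T]$ pairwise inequivalent constituents; the push--pull identity $\langle \res\,\chi^{\bla},\res\,\chi^{\bla}\rangle=\sum_{\delta\in H}\langle\chi^{\bla},\delta\otimes\chi^{\bla}\rangle=|H_{\bla}|$ pins down the number of constituents; and the computation $\delta\otimes V^{\bla}\cong V^{\bm\mu}$ with components shifted by $m=r/q$ identifies $H$-orbits on $\Y(r,n)$ with equivalence classes of $(m,q)$-necklaces, which yields both the parametrization of $\irr(\Grqn)$ by pairs and the ``moreover'' statement. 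Two small remarks: the constituents ${}^{g}\theta$, $g$ ranging over coset representatives of $T$, are pairwise inequivalent as $\Grqn$-modules but of course mutually conjugate in $G(r,1,n)$, so ``pairwise non-conjugate'' is a slip of wording; and, as you acknowledge, the one step you do not prove is the assignment of the specific label $\delta\in H_{\bla}$ to each constituent so that $V^{(\tbla,\delta)}$ is well defined --- this is a matter of fixing a parametrization and is exactly what the explicit constructions in the cited papers provide, so deferring it is consistent with (indeed goes beyond) the level of detail the paper itself adopts. The net effect is that your write-up is more self-contained than the paper: it buys the reader proofs of multiplicity-freeness, of the count $|H_{\bla}|$, and of the orbit criterion without consulting the references, at the cost of still importing the concrete $(\tbla,\delta)$-labeling from them.
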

	
	Next lemma is an important consequence of Clifford theory and will be helpful in studying character values of an element in $G(r,1,n)$ whose conjugacy class does not split in the normal subgroup $\Grqn$.
	
	\begin{lemma}\label{nosplit}
		Let $N$ be a normal subgroup of a group $G$. Suppose that $\rho$ and $\tau$ are irreducible representations of $G$ and $N$ with characters $\chi_\rho$ and $\chi_\tau$, respectively, such that $\langle \chi_\rho|_N, \chi_\tau\rangle_N \neq 0$. Assume that $x$ is an element of $N$ such that its conjugacy class in $G$ does not split in $N$. Then, $\chi_\rho|_N(x)$ is a positive integral multiple of $\chi_\tau(x)$.
	\end{lemma}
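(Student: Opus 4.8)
The plan is to invoke Clifford's theorem on the restriction $\chi_\rho|_N$ and then exploit the non-splitting hypothesis to show that every irreducible constituent of $\chi_\rho|_N$ takes the same value at $x$ as $\chi_\tau$ does.

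First I would recall Clifford's theorem (see \cite[Chapter~6]{MR0460423}): since $\langle \chi_\rho|_N, \chi_\tau\rangle_N \neq 0$, the character $\chi_\tau$ is a constituent of $\chi_\rho|_N$, every constituent of $\chi_\rho|_N$ is a $G$-conjugate of $\chi_\tau$, and all of them occur with one and the same multiplicity $e \geq 1$. Writing the $G$-orbit of $\chi_\tau$ as $\{\,{}^{g_1}\chi_\tau, \ldots, {}^{g_k}\chi_\tau\,\}$ with $g_1, \ldots, g_k \in G$ and ${}^{g}\chi_\tau(y) := \chi_\tau(g^{-1} y g)$ for $y \in N$, we obtain
$$ \chi_\rho|_N \;=\; e \sum_{i=1}^{k} {}^{g_i}\chi_\tau . $$

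Next I would evaluate this identity at $x$. For each $i$, the element $g_i^{-1} x g_i$ lies in $N$ because $N$ is normal and $x \in N$, and it is $G$-conjugate to $x$; by hypothesis the $G$-conjugacy class of $x$ meets $N$ in a single $N$-conjugacy class, so $g_i^{-1} x g_i$ is in fact $N$-conjugate to $x$. Since $\chi_\tau$ is a class function on $N$, this yields ${}^{g_i}\chi_\tau(x) = \chi_\tau(g_i^{-1} x g_i) = \chi_\tau(x)$ for every $i$. Substituting into the displayed equation gives $\chi_\rho|_N(x) = e k\, \chi_\tau(x)$, and $e k$ is a positive integer, which is exactly the assertion.

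There is no serious obstacle here; the only point that needs care is the bookkeeping in the preceding step — spelling out that ``the conjugacy class of $x$ in $G$ does not split in $N$'' is precisely the statement that $\{\, g^{-1} x g : g \in G \,\} \cap N$ forms a single $N$-conjugacy class, equivalently $g^{-1} x g \sim_N x$ for all $g \in G$ — so that the Clifford constituents of $\chi_\rho|_N$, which a priori may be genuinely distinct characters of $N$, nonetheless all agree at this particular element $x$. Everything else is a routine application of Clifford theory.
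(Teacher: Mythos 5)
Your proof is correct and follows essentially the same route as the paper: both invoke Clifford's theorem to write $\chi_\rho|_N$ as a common multiplicity times the sum of the $G$-conjugates of $\chi_\tau$, then evaluate at $x$ and use the non-splitting hypothesis to see that every conjugate character takes the value $\chi_\tau(x)$, giving $\chi_\rho|_N(x)=ek\,\chi_\tau(x)$. If anything, you spell out more explicitly than the paper why $g_i^{-1}xg_i$ is $N$-conjugate to $x$, which is the one step the paper leaves implicit.
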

	
	\begin{proof}
		Assume that $\{g_1=id, g_2, \ldots, g_k\}$ be a set of coset representatives of the inertia group of $\tau$ in $G$. 
		By Clifford theory, $\rho|_N = \displaystyle\oplus_{i=1}^{k} (\tau^{g_i})^{f} = (\displaystyle\oplus_{i=1}^{k} \tau^{g_i})^{f}$, where $\tau^{g_i}$ is a conjugate representation of $\tau$ and $f$ is the multiplicity of each conjugate representation. Then the result follows from the following computation:
		\begin{align*}
			\chi_\rho|_N(x) = f\sum\limits_{i=1}^{k}\chi_\tau^{g_i}(x) 
			= f \sum\limits_{i=1}^{k}\chi_\tau(g_i^{-1}xg_i)
			= f \sum\limits_{i=1}^{k}\chi_\tau(x) = fk\chi_\tau(x).
		\end{align*}
	\end{proof}
	
	\subsection{Murnaghan--Nakayama rule for $G(r, 1,n)$}

	The Murnaghan--Nakayama rule \cite[Theorem 4.3]{S89} is a combinatorial method to compute the irreducible characters of $G(r,1,n)$. An edge connected skew shape which does not contain a $2 \times 2$ square of boxes is called a \emph{ribbon}. Given a partition $\mu$ of $n$, a \emph{ribbon tableau} of shape $\mu$ is obtained by filing the boxes of $\mu$ with positive integers such that the entries in each row and column are weakly increasing and each appearing integer forms a distinct ribbon.  Let  $\bla  = (\bla_0, \bla_1, \ldots, \bla_{r-1})$ be an $r$-partite partition of $n$. An $r$-tuple $T=(T_0,\ldots, T_{r-1})$ is called \emph{$r$-partite ribbon tableau} of shape $\bla$ if each $T_j$ is a ribbon tableau of shape $\bla_j$ and for each positive integer $i$, the ribbon containing $i$ appears in at most one  component of $T$. 
	
	Given an $r$-partite ribbon tableau $T=(T_0,\ldots, T_{r-1})$ we denote its $i$-th \emph{index}, $i$-th \emph{length} and $i$-th \emph{height} by $f_T(i), l_T(i)$ and $ht_T(i)$, respectively, which are defined as follows:
	
	\begin{tabular}{ll}
		$f_T(i) :=$ index of the ribbon containing $i$ in $T$; \\	
		$l_T(i) :=$ size of the ribbon containing $i$ in $T$; \\
		$ht_T(i) :=$ one less than the number of rows in the ribbon containing $i$ in $T$.
    \end{tabular}

	\begin{theorem}{\cite[Proposition 2.2]{APR10}}(Murnaghan--Nakayama rule for $G(r,1,n)$)
		\label{MNR}
		Suppose that $\chi^{\bla}$ denotes the irreducible character of $G(r,1,n)$ corresponding to $\bla \in \Y(r,n)$. Let $\pi = (z_1, z_2, \ldots, z_n; \sigma) \in G(r,1,n)$ such that the corresponding cycle decomposition of $\sigma$  in an arbitrarily fixed order be $c = (c_1, c_2, \ldots, c_t)$. For $1 \leq i \leq t$, let $l(c_i)$ and $z(c_i)$ be the length and color of the cycle $c_i$, respectively. Then,
		\begin{equation}\label{eqmnr}
			\chi^{\bla}(\pi) = \sum\limits_{T \in RT_c(\bla)}\prod\limits_{i=1}^{t}(-1)^{ht_T(i)}\omega^{f_T(i).z(c_i)},
		\end{equation}
		where $RT_c(\bla)$ is the set of all $r$-partite ribbon tableaux $T$ of shape $\bla$ such that $l_T(i) = l(c_i)$ for all $ 1 \leq i \leq t$, and $\omega = e^{2\pi \iota/r}$. 
	\end{theorem}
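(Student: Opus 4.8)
The plan is to deduce the stated formula from the Mackey/wreath-product structure of $G(r,1,n) = \Zr^n \rtimes S_n$ together with the classical Murnaghan--Nakayama rule for the symmetric group. First I would fix notation: the irreducible $G(r,1,n)$-module $V^{\bla}$ attached to $\bla = (\bla_0,\dots,\bla_{r-1})$ is constructed by partitioning $\{1,\dots,n\}$ into blocks of sizes $n_j = |\bla_j|$, forming the Young subgroup $G(r,1,n_0)\times\cdots\times G(r,1,n_{r-1})$, twisting the $j$-th factor by the linear character $\omega^{j\,(\cdot)}$ of $\Zr^{n_j}$ tensored with the Specht module $S^{\bla_j}$ of $S_{n_j}$, and inducing up. The character value $\chi^{\bla}(\pi)$ is then computed by the Frobenius formula for induced characters: one sums the twisted product character over the cosets $g$ such that $g^{-1}\pi g$ lies in the Young subgroup.

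The key combinatorial translation is that a coset contributing to this sum corresponds exactly to a way of distributing the $t$ cycles $c_1,\dots,c_t$ of $\sigma$ among the $r$ components so that the cycles landing in component $j$ can be arranged into a permutation of the $n_j$-element block; a cycle $c_i$ placed in component $j$ contributes the scalar $\omega^{j\cdot z(c_i)}$ coming from the $\Zr$-twist (here one uses that the color $z(c_i)$ is a conjugacy invariant of the cycle, so it does not depend on the choice of coset representative within its class), and contributes, through the $S_{n_j}$-part, a factor governed by the ordinary Murnaghan--Nakayama rule applied to $\bla_j$. Iterating the classical rule on each $\bla_j$ and stripping one ribbon per cycle turns the $S_{n_j}$-contributions into a sum over ribbon tableaux $T_j$ of shape $\bla_j$ with ribbon sizes matching the lengths $l(c_i)$ of the cycles assigned to component $j$, each ribbon carrying the sign $(-1)^{ht_T(i)}$. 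Re-assembling the components, one obtains precisely a sum over $r$-partite ribbon tableaux $T \in RT_c(\bla)$, with the weight $\prod_{i=1}^t (-1)^{ht_T(i)}\omega^{f_T(i)z(c_i)}$, which is \eqref{eqmnr}.

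I would organise the write-up in three steps: (1) recall the construction of $V^{\bla}$ and the induced-character formula; (2) match cosets with choices of "which cycle goes to which component", checking that the $\Zr$-twist produces exactly the root-of-unity factors $\omega^{f_T(i)z(c_i)}$; (3) invoke the classical Murnaghan--Nakayama rule componentwise and recombine. The main obstacle is Step (2): one must argue carefully that the sum over cosets of $G(r,1,n_0)\times\cdots\times G(r,1,n_{r-1})$ in $G(r,1,n)$ that conjugate $\pi$ into the Young subgroup factors cleanly as (a choice of distribution of cycles into colour-components) times (a sum internal to each component), and that the $\Zr$-character value on $g^{-1}\pi g$ depends only on the colours $z(c_i)$ of the cycles and the components $f_T(i)$ they are sent to — not on the internal details of $g$. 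Since the color of a cycle is invariant under conjugation (Section~\ref{conj}), this bookkeeping goes through, but it is the step that requires genuine care rather than routine computation. Alternatively, since this statement is quoted from \cite[Proposition 2.2]{APR10} and is an extension of the generalized-symmetric-group rule of \cite[Theorem 4.3]{S89}, one may simply cite those references; the sketch above indicates the argument underlying them.
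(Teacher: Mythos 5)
The paper does not prove Theorem~\ref{MNR} at all: it is quoted verbatim from \cite[Proposition 2.2]{APR10} (which in turn rests on the wreath-product Murnaghan--Nakayama rule of \cite[Theorem 4.3]{S89}), so your closing remark that one may simply cite these references is exactly what the authors do. Your sketch, however, is a correct outline of the standard argument behind those references: construct $V^{\bla}$ by the little-groups method, i.e.\ induce $\bigotimes_j\bigl(\omega^{j(\cdot)}\otimes S^{\bla_j}\bigr)$ from the Young-type subgroup $H=G(r,1,n_0)\times\cdots\times G(r,1,n_{r-1})$, apply the Frobenius induced-character formula, and then iterate the classical rule componentwise. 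The step you flag as needing genuine care does close, and for a concrete reason worth recording: $H$ is the \emph{full} preimage in $G(r,1,n)$ of the stabilizer of the block decomposition (it contains all of $\Zr^n$), so if $g_1,g_2$ both carry the blocks onto the same sets then $g_1^{-1}g_2\in H$; hence the cosets $gH$ with $g^{-1}\pi g\in H$ are in bijection with assignments of the cycles $c_1,\ldots,c_t$ to the $r$ components respecting the sizes $n_j$, with no multiplicity issue, and $\chi_H(g^{-1}\pi g)$ depends only on the assignment because length and color of a cycle are conjugation invariants. The extended character on component $j$ evaluates to $\omega^{j\sum z(c_i)}\varkappa^{\bla_j}(\text{cycle type})$, and iterating the classical Murnaghan--Nakayama rule in the fixed order of the cycles converts each $\varkappa^{\bla_j}$-value into a sum over ribbon tableaux of shape $\bla_j$ with the prescribed ribbon sizes and signs $(-1)^{ht_T(i)}$; one should just note explicitly that such removal sequences correspond to ribbon tableaux in the sense defined in the paper. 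Reassembling gives precisely \eqref{eqmnr}. So your route is a genuine proof sketch where the paper offers only a citation; filled in along the lines above it would be a complete and correct argument.
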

 
Let us illustrate Theorem \ref{MNR} by an example.
\begin{example}
	Consider the $3$-partite partition $\bla = ((2,1), \emptyset, (1,1,1))\in \Y(3,6)$ and the element $\pi$ of $G(3,1,6)$ 
	$$\pi = (1,1,0,0,1,0;(1,2,3)(4,5)).$$ Here $c=(c_1,c_2,c_3)$, where we choose $c_1=(1,2,3), c_2 = (4,5)$ and $c_3=(6)$. The set $RT_c(\bla)$ consists of three $3$-partite ribbon tableaux $T=(T_0,T_1,T_2)$ as described below
	\begin{displaymath}
	\ytableausetup{centertableaux}
	\ytableaushort
	{11,1}, 
	\hspace{0.1cm}
	\emptyset,
	\hspace{0.1cm}
	\ytableausetup{centertableaux}
	\ytableaushort
	{2,2,3} \hspace{0.1cm};
	\hspace{0.5cm}
	\ytableaushort
	{22,3}, 
	\hspace{0.1cm}
	\emptyset,
	\hspace{0.1cm}
	\ytableausetup{centertableaux}
	\ytableaushort
	{1,1,1} \hspace{0.1cm};
	\hspace{0.5cm}
	\ytableaushort
	{23,2}, 
	\hspace{0.1cm}
	\emptyset,
	\hspace{0.1cm}
	\ytableausetup{centertableaux}
	\ytableaushort
	{1,1,1} \hspace{0.1cm}.
	\end{displaymath}
	Thus, the character value is $$\chi^{\bla}(\pi) = (-1)(-1)\omega^{2.1}+(-1)^2\omega^{2.2}+(-1)^2\omega^{2.2}(-1) = \omega^2+\omega-\omega = \omega^2.$$
	Now consider another element of $G(3,1,6)$:
	$$\pi=(1,1,0,0,1,0;(1,2)(3,5)(4,6)).$$ 
	Here $l(c_i)=2$ for all $i=1,2,3.$
Observe that there are no $3$-partite ribbon tableaux $T=(T_0,T_1,T_2)$ of shape $\bla$ such that $l_T(i)=2$. Hence the character value $\chi^{\bla}(\pi)$ is zero.
\end{example}
	
	In the following corollary of Theorem \ref{MNR}, we relate character values of $G(r,1,n)$ with those of $S_n$.
	\begin{corollary}\phantomsection\label{MN cor}
		\begin{enumerate}[(i)]
			\item For an $r$-partite partition $\bla = (\bla_0, \bla_1, \ldots, \bla_{r-1})$ of $n$, the character value $\chi^{\bla}((z_1, z_2, \ldots, z_n; (1,2,\ldots,n)))$ is nonzero if and only if $\bla=\widehat{\bla}^j$ for some $0 \leq j \leq r-1$ with $\bla_j$ being a hook of size $n$.
			 In such a case, we have 
			$$\chi^{\bla}((z_1, z_2, \ldots, z_n; (1,2,\ldots,n)))= \omega^{j(z_1+\cdots+z_n)}\varkappa^{\bla_j}((1,2,\ldots, n)).$$
			 
			\item For an element $\sigma$ of $S_n$ and $0 \leq j \leq r-1$, we have $$ \chi^{\widehat{\bla}^j}((0,\ldots,0; \sigma)) = \varkappa^{\bla_j}(\sigma).$$ 
			\item For $\pi=(z_1,\ldots,z_n;\sigma)\in G(r,1,n)$, we have $$\chi^{\widehat{\bla}^0}(\pi) = \varkappa^{\bla_0}(\sigma).$$
			Moreover, for $0 \leq j,k \leq r-1$, $\chi^{\widehat{\bla}^j}(\pi)$ is nonzero if only if $\chi^{\widehat{\bla}^k}(\pi)$ is nonzero.
			
			\item The character $ \chi^{\widehat{\bla}^j} $ never vanishes on $(z_1, z_2, \ldots, z_n; (1))  \in G(r,1,n)$. 
		\end{enumerate}
	\end{corollary}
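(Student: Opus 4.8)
The plan is to obtain all four statements as direct specializations of the Murnaghan--Nakayama rule \eqref{eqmnr}, the unifying observation being: \emph{if $\bla=\widehat{\bla}^j$ has a single nonempty component, then in every $r$-partite ribbon tableau $T$ of shape $\bla$ all ribbons lie in the $j$-th component, so $f_T(i)=j$ for all $i$.} Hence, for such $\bla$ and any $\pi=(z_1,\dots,z_n;\sigma)$ with cycle data $c=(c_1,\dots,c_t)$, the phase factor $\prod_{i=1}^t\omega^{f_T(i)z(c_i)}=\omega^{j\sum_i z(c_i)}=\omega^{j(z_1+\cdots+z_n)}$ is a root of unity independent of $T$, and pulls out of the sum in \eqref{eqmnr}. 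What is left, $\sum_T\prod_i(-1)^{ht_T(i)}$, is precisely the classical Murnaghan--Nakayama sum for $S_n$ for the shape $\bla_j$ and the permutation $\sigma$, once one records the evident statistic-preserving bijection between $RT_c(\widehat{\bla}^j)$ and the ribbon tableaux of $\bla_j$ whose ribbon sizes are $l(c_1),\dots,l(c_t)$ (the $r$-partite side just carries empty tableaux in the other $r-1$ slots). This yields the master identity $\chi^{\widehat{\bla}^j}(\pi)=\omega^{j(z_1+\cdots+z_n)}\,\varkappa^{\bla_j}(\sigma)$, from which (ii) follows by setting the colors to zero; the first assertion of (iii) follows by taking $j=0$ (so the prefactor is $1$); and the second assertion of (iii) follows since the prefactor is a unit, so nonvanishing is governed by $\varkappa^{\bla_j}(\sigma)$ alone---and under the convention of Definition~\ref{lajk} that $\widehat{\bla}^j$ and $\widehat{\bla}^k$ carry the \emph{same} underlying partition of $n$, this is one and the same number.

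For (i) I would first observe that $RT_c(\bla)\neq\emptyset$ for a single $n$-cycle $c$ forces one component of $\bla$ to be a connected skew shape that is itself a (straight) partition shape and contains no $2\times 2$ block of boxes, i.e.\ a hook of size $n$, with the remaining components empty; thus $\bla=\widehat{\bla}^j$ with $\bla_j$ a hook, and then $RT_c(\bla)$ has a single element, with $ht_T(1)$ equal to one less than the number of rows of $\bla_j$. Feeding this into the master identity and invoking the classical fact that $\varkappa^{\mu}$ evaluated at an $n$-cycle vanishes unless $\mu$ is a hook and otherwise equals $(-1)^{(\#\text{rows of }\mu)-1}$ gives both the displayed formula and its ``only if'' clause. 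For (iv), the element $(z_1,\dots,z_n;(1))$ has all cycles of length $1$, so $RT_c(\widehat{\bla}^j)$ is exactly the set of standard Young tableaux of $\bla_j$ and every size-$1$ ribbon has height $0$; the master identity then gives $\chi^{\widehat{\bla}^j}((z_1,\dots,z_n;(1)))=\omega^{j(z_1+\cdots+z_n)}\cdot(\text{number of standard Young tableaux of }\bla_j)$, which is nonzero because this count is at least $1$.

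None of the steps is genuinely difficult; the two points that need a little care are (a) writing down the bijection $RT_c(\widehat{\bla}^j)\leftrightarrow RT_c(\bla_j)$ precisely enough to see that it preserves the length and height statistics, so that the classical $S_n$ Murnaghan--Nakayama rule applies verbatim, and (b) fixing the (slightly abusive) notation of Definition~\ref{lajk} so that the second statement of (iii) is meaningful, namely that $\widehat{\bla}^j$ and $\widehat{\bla}^k$ are built from one and the same partition of $n$ placed in positions $j$ and $k$. With these settled, the corollary is a routine reading-off from Theorem~\ref{MNR}.
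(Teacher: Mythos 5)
Your proposal is correct and follows essentially the same route as the paper: specialize the Murnaghan--Nakayama rule for $G(r,1,n)$, note that for $\widehat{\bla}^j$ every ribbon lies in the $j$-th component so $f_T(i)=j$ and the phase $\omega^{j(z_1+\cdots+z_n)}$ factors out, and identify the remaining sum with the classical rule for $S_n$ (plus the hook argument for a single $n$-cycle in (i)). Your ``master identity'' $\chi^{\widehat{\bla}^j}(\pi)=\omega^{j(z_1+\cdots+z_n)}\varkappa^{\bla_j}(\sigma)$ merely packages the paper's casewise computations in one formula, and in doing so makes the nonvanishing comparison in part (iii) explicit.
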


\begin{proof}\begin{enumerate}[(i)]
		\item Let $\chi^{\bla}((z_1, z_2, \ldots, z_n; (1,2,\ldots,n)))$ is nonzero. Since $ \sigma = (1,2,\ldots,n)$ has a single part of length $n$, $T$ must have only one non empty component, say $T_j$, which must be a single ribbon of size $n$. That implies $\bla=\widehat{\bla}^j$ and $\bla_j$ is a hook of size $n$. The converse and the computation of character value are straightforward.

\item Here, $\widehat{\bla}^j$ has only one nonempty component and $z_i = 0$ for $i=0,1, \ldots, r-1$. We obtain the result by evaluating equation \eqref{eqmnr} and comparing it with Murnaghan--Nakayama rule for $S_n$ (\cite[Theorem 7.17.3]{MR1676282}).
		
		\item Since $T_j=\emptyset$ for all $j\neq 0$, $f_T(i)=0$ for all $i$. So \eqref{eqmnr} is equivalent to Murnaghan--Nakayama rule to compute irreducible characters of $S_n$ corresponding to the partition $\bla_0$. For the next part, a routine calculation shows that $$\chi^{\widehat{\bla}^j}(\pi)= \omega^{j-k}\chi^{\widehat{\bla}^k}(\pi).$$
\item When $\sigma = (1)$, the set $RT_c(\bla)$ is nonempty and for each $r$-partite ribbon tableau $T \in RT_c(\bla)$, we have $ht_T(i) = 0, f_T(i)=j, z(c_i) = z_i$ for all $i=1,2,\ldots, n$. 	
\end{enumerate}
\end{proof}

Next, we write some observations as a lemma which will be useful later.

	\begin{lemma}\phantomsection\label{proj}
	\begin{enumerate}[(i)]
		\item If $\pi = (z_1, z_2, \ldots, z_n; \sigma)$ is a $p$-regular element in $G(r,1,n)$, then $\sigma$ is a $p$-regular element of $S_n$. 
		\item The order of sum of elements of odd order in an abelian group can never be even.
	\end{enumerate}
\end{lemma}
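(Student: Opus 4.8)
The statement to prove is Lemma~\ref{proj}, which has two parts. Here is my plan.

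\medskip

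For part (i), the plan is to use the structure of $G(r,1,n) = \Zr^n \rtimes S_n$ together with the natural surjection $\varphi \colon G(r,1,n) \to S_n$ sending $\pi = (z_1,\dots,z_n;\sigma)$ to $\sigma$. Since $\varphi$ is a group homomorphism, the order of $\sigma = \varphi(\pi)$ divides the order of $\pi$. If $\pi$ is $p$-regular, then $p \nmid |\pi|$, and since $|\sigma|$ divides $|\pi|$, we get $p \nmid |\sigma|$, so $\sigma$ is $p$-regular in $S_n$. This is immediate and needs at most two sentences; the only thing worth spelling out is that a quotient homomorphism cannot increase element orders.

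\medskip

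For part (ii), I would argue by contradiction (or directly). Let $A$ be an abelian group, let $a_1, \dots, a_k \in A$ each have odd order, and set $s = a_1 + \cdots + a_k$; I want to show $|s|$ is odd. The key point is that the set $A_{\mathrm{odd}}$ of elements of odd order in an abelian group $A$ forms a subgroup: it is precisely the set of elements killed by some odd integer, equivalently the ``odd part'' or the subgroup of elements whose order is a unit modulo $2$ (in fact $A_{\mathrm{odd}} = \{a \in A : a \in 2A \text{ iteratively}\}$, but cleanest is: $A_{\mathrm{odd}}$ is the image of the map ``multiply by a large odd power'' when $A$ is finite, or simply note closure directly). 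To see closure directly: if $a$ has order $m$ and $b$ has order $n$ with $m, n$ odd, then $\mathrm{lcm}(m,n)$ is odd and annihilates $a+b$, so $|a+b|$ divides an odd number, hence is odd; inverses and identity are obvious. Therefore $s \in A_{\mathrm{odd}}$, so $|s|$ is odd, i.e. never even. One should be mildly careful that the lemma as stated implicitly assumes the sum is taken in an abelian group and that "odd order" is with respect to that group; the empty sum gives the identity, which has order $1$, odd, so that edge case is fine too.

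\medskip

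I do not anticipate a genuine obstacle here: both parts are short structural observations. The only mild subtlety is in part (ii), making sure the argument that odd-order elements are closed under addition is stated cleanly — the one-line proof via $\mathrm{lcm}$ of the orders being odd and annihilating the sum is the way I would present it, avoiding any appeal to primary decomposition unless one wants to invoke it for slickness. I would present part (i) in one or two sentences and part (ii) in a short paragraph, keeping the whole proof to a few lines.
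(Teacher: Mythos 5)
Your proposal is correct and follows essentially the same route as the paper: part (i) via the projection homomorphism onto $S_n$ (order of the image divides the order of the element), and part (ii) by noting the sum of odd-order elements in an abelian group is annihilated by an odd integer (you use the lcm of the orders, the paper uses their product), hence has odd order.
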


\begin{proof}
	\begin{enumerate}[(i)]
		\item This follows from the observation that under the projection map $$pr: G(r,1,n) \rightarrow S_n,$$ the order of $ pr(z_1, z_2, \ldots, z_n; \sigma)$ divides the order of $(z_1, z_2, \ldots, z_n; \sigma)$. 
		\item Let $g_1, \ldots, g_s$ be elements of odd orders $l_1, \ldots, l_s$ respectively, in an abelian group. Then the order of $g_1+\cdots+g_s$ divides $l_1 \cdots l_s$. 
	\end{enumerate}
\end{proof}

	\section{Proof of Theorems~\ref{p23}, ~\ref{MTq1}, and ~\ref{MTqnot1-hefty}}
	\label{sec:main-results}

	We now prove the results stated in Section \ref{intro} for the groups $G(r,1,n)$ and $G(r,q,n)$. The groups $G(r,1,1)$ and $G(r,q,1)$ are cyclic all of whose irreducible characters are one-dimensional and thus, quasi $p$-Steinberg. Therefore, now onwards we assume that $n \geq 2$ for $G(r,1,n)$ and $G(r,q,n)$.
	
	\begin{proof}[{\bf Proof of Theorem~\ref{p23}}]
		\begin{enumerate}[(i)]
\item For $n \in \{2,3\}$, consider an $n$-regular element $g$, say of order $t$, in $G$ and an $n$-dimensional irreducible character $\chi$ of $G$. Then, $\chi(g)$ is a sum of $t$-th roots of unity and there are $n$-summands in this sum. Such a sum can never be zero \cite[Table 3.1, p.141]{PR98}. For $p=2$ and $n=4$, the character value of a $4$-dimensional irreducible character on a $2$-regular element $g$ of odd order $t$ is a sum of four summands of $t$-th roots of unity. Such a sum can not be zero because a sum of four $t$-th roots of unity can be zero only when at least one of the roots of unity is of even order.
\item This follows from the fact that $g \in G$ is $p$-regular if and only if $\mathfrak{a}(g)$ is $p$-regular.
		\end{enumerate}	
	\end{proof}
	
	\begin{remark}
	\label{rk:p23}  It is interesting to note that Theorem~\ref{p23}(i) is not true in general for any $n \geq 5$. The $n$-dimensional irreducible character $\chi^{(n,1)}$ of $S_{n+1}$ is not a quasi $p$-Steinberg character for any prime $p \leq n+1 $ (see Table \ref{tab:quasi-Sn}).
	\end{remark}

	As an immediate application of Theorem~\ref{p23}(i), we list the triples $(n,\bla_j,p)$ in Table~\ref{tab:specific-quasi} which correspond to some specific quasi $p$-Steinberg characters $\chi^{\widehat{\bla}^j}$ of $G(r,1,n)$.
	 
\begin{table}[ht] 
\begin{tabular}[t]{c|c|c|c}
	 \,\,\,\,\, \, $n$ \,\,\,\,\, \ &\,\,\,\,\, \,\,\, $\bla_j$  \,\,\,\,\, &\,\,\,$\dim(\chi^{\widehat{\bla}^j})$\,\,\,\,\,\, & \,\,\,\,\,$p$  \,\,\,\,\, \\
	 \hline \hline
	 $3$  & $(2,1)$ & $2$ & $2$ \\
	 \hline
	 $4$  & $(3,1)$   & $3$ & $3$ \\
	 $4$  & $(2,1,1)$ & $3$ & $3$ \\
	 $4$  & $(2,2)$ & $2$ & $2$ \\
	 \hline
	 $5$  & $(4,1)$ & $4$& $2$ \\
	 $5$  & $(2,1,1,1)$ & $4$ & $2$ \\
	 \hline
	 \end{tabular}
	 \vspace{.2cm} 
			\caption{Some specific quasi $p$-Steinberg characters of $G(r,1,n)$}\label{tab:specific-quasi}
	\end{table}				
\begin{prop}\label{reduction}
Given an $r$-partite partition $\bla = (\bla_0, \bla_1, \ldots, \bla_{r-1})$ of $n$ and a prime $p$, assume that $\chi^{\bla}$ is a quasi $p$-Steinberg character of $G(r,1,n)$. Then, either $\bla=\widehat{\bla}^j$ for some $0 \leq j \leq r-1$ or $\bla=\widehat{\bla}^{j,k}$ for some $0 \leq j\neq k\leq r-1$.
\end{prop}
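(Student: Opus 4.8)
The plan is to assume that $\chi^{\bla}$ is quasi $p$-Steinberg and then, by evaluating it at one or two well-chosen $p$-regular elements of $G(r,1,n)$, force $\bla$ into one of the two stated shapes. The only inputs needed are that $\chi^{\bla}$ cannot vanish at a $p$-regular element, together with the Murnaghan--Nakayama rule (Theorem~\ref{MNR}) and its Corollary~\ref{MN cor}. First I would dispose of $n=2$ directly: an $r$-partite partition of $2$ either has both boxes in one component (so it is $\widehat{\bla}^j$ with $\bla_j\vdash 2$) or has one box in each of two components (so it is $\widehat{\bla}^{j,k}$ with $\bla_j=(1)$), and there is nothing to prove. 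So assume $n\geq 3$ and split according to whether $p\mid n$.

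If $p\nmid n$, consider $\pi=(0,\dots,0;(1,2,\dots,n))\in G(r,1,n)$. Its image in $S_n$ is an $n$-cycle and its $n$-th power is the identity, so $\pi$ has order $n$ and hence is $p$-regular. Since $\chi^{\bla}(\pi)\neq 0$, Corollary~\ref{MN cor}(i) applies and forces $\bla=\widehat{\bla}^j$ for some $j$ (indeed with $\bla_j$ a hook of size $n$), which is of the required form.

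If $p\mid n$, then $p\nmid n-1$, so I instead take $\pi=(0,\dots,0;(1,2,\dots,n-1)(n))\in G(r,1,n)$, an $(n-1)$-cycle together with a fixed point, noting $n-1\geq 2$. Its order equals $n-1$, so $\pi$ is $p$-regular. As $\chi^{\bla}(\pi)\neq 0$, Theorem~\ref{MNR} yields a nonempty set $RT_c(\bla)$ for $c=\big((1,\dots,n-1),(n)\big)$; pick an $r$-partite ribbon tableau $T=(T_0,\dots,T_{r-1})$ of shape $\bla$ built from one ribbon of size $n-1$ and one ribbon of size $1$. The ribbon of size $n-1$ is connected, hence lies entirely in one component $\bla_j$, forcing $|\bla_j|\geq n-1$, so $|\bla_j|\in\{n-1,n\}$. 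If $|\bla_j|=n$, then $\bla=\widehat{\bla}^j$. If $|\bla_j|=n-1$, that ribbon fills $\bla_j$, so the remaining box occupies a different component $\bla_k$ with $|\bla_k|=1$, i.e.\ $\bla_k=(1)$, and all other components are empty; thus $\bla=\widehat{\bla}^{j,k}$. In either branch $\bla$ has the desired shape.

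The main obstacle --- really the only idea in the argument --- is choosing the right test element when $p\mid n$: the full $n$-cycle is then not $p$-regular, but replacing it by an $(n-1)$-cycle plus a fixed point restores $p$-regularity (consecutive integers are coprime) while remaining rigid enough that the Murnaghan--Nakayama combinatorics still pins $\bla$ down up to a single stray box. The routine points to be careful about are the order computations in the wreath product that certify $p$-regularity, and the degenerate case $n=2$, where the ``$(n-1)$-cycle plus fixed point'' element collapses to the identity and must be handled by inspection.
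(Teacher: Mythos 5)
Your proof is correct and follows essentially the same route as the paper: when $p\nmid n$ you evaluate at $(0,\dots,0;(1,2,\dots,n))$ and invoke Corollary~\ref{MN cor}(i), and when $p\mid n$ you evaluate at the $(n-1)$-cycle element, which is exactly the paper's choice of $\alpha_1$. You merely spell out the Murnaghan--Nakayama combinatorics (and the trivial $n=2$ case) that the paper leaves implicit.
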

\begin{proof}
When $p\nmid n$, the element $\alpha=(0, 0, \ldots, 0; (1,2,\ldots,n))$ is $p$-regular in $G(r,1,n)$. Since $\chi^{\bla}$ is a quasi $p$-Steinberg character, we have $\chi^{\bla}(\pi)\neq 0$. By Corollary \ref{MN cor}(i), we must have $\bla=\widehat{\bla}^j$ for some $0 \leq j \leq r-1$.

When $p\mid n$, consider the $p$-regular element $\alpha_1=(0, 0, \ldots, 0; (1,2,\ldots,n-1))$. By Murnaghan--Nakayama rule for $G(r,1,n)$, the only  possible $\bla = (\bla_0, \bla_1, \ldots, \bla_{r-1})$ such that $\chi^{\bla}(\pi)\neq 0$ must satisfy $\bla = \widehat{\bla}^j$ for some $0 \leq j \leq r-1$ or $\bla=\widehat{\bla}^{j,k}$ for some $0 \leq j\neq k\leq r-1$.
\end{proof}

	\begin{prop}\label{QGS}
		Let $ \mu$ be a partition of $n$ such that  $ \mu \notin \{(n), (1^n)\}$.  The character $ \chi^{\widehat{\bla}^j}$, where $\bla_j=\mu$, is a quasi $p$-Steinberg character of $G(r,1,n)$ if and only if $\varkappa^\mu$ is a quasi $p$-Steinberg character of $S_n$. 
	\end{prop}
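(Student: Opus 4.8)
The plan is to transfer the question to the symmetric group $S_n$ via the projection $\mathrm{pr}\colon G(r,1,n)\to S_n$ together with the character formulas recorded in Corollary~\ref{MN cor}. Two facts do all the work. First, by Lemma~\ref{proj}(i) every $p$-regular $\pi=(z_1,\dots,z_n;\sigma)$ in $G(r,1,n)$ has $p$-regular image $\sigma$ in $S_n$; conversely, every $p$-regular $\sigma\in S_n$ lifts to the element $(0,\dots,0;\sigma)$ of $G(r,1,n)$, which has the same order as $\sigma$ and is therefore $p$-regular. Second, by Corollary~\ref{MN cor}(iii), for $\bla=\widehat{\bla}^j$ with $\bla_j=\mu$ one has $\chi^{\widehat{\bla}^j}(\pi)\neq 0$ if and only if $\varkappa^{\mu}(\sigma)\neq 0$ (combining the identity $\chi^{\widehat{\bla}^0}(\pi)=\varkappa^{\mu}(\sigma)$, with $\mu$ placed in the zeroth slot, with the clause that $\chi^{\widehat{\bla}^j}(\pi)$ vanishes exactly when $\chi^{\widehat{\bla}^0}(\pi)$ does); moreover, by Corollary~\ref{MN cor}(ii), $\chi^{\widehat{\bla}^j}((0,\dots,0;\sigma))=\varkappa^{\mu}(\sigma)$ exactly.

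For the ``if'' direction, I would assume $\varkappa^{\mu}$ is quasi $p$-Steinberg for $S_n$ and let $\pi=(z_1,\dots,z_n;\sigma)$ be an arbitrary $p$-regular element of $G(r,1,n)$. Then $\sigma$ is $p$-regular, so $\varkappa^{\mu}(\sigma)\neq 0$, and the second fact above forces $\chi^{\widehat{\bla}^j}(\pi)\neq 0$. Since $\pi$ was arbitrary, $\chi^{\widehat{\bla}^j}$ is quasi $p$-Steinberg for $G(r,1,n)$.

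For the ``only if'' direction, I would assume $\chi^{\widehat{\bla}^j}$ is quasi $p$-Steinberg for $G(r,1,n)$ and let $\sigma$ be an arbitrary $p$-regular element of $S_n$. The element $\pi_0=(0,\dots,0;\sigma)$ is then $p$-regular in $G(r,1,n)$, so $\chi^{\widehat{\bla}^j}(\pi_0)\neq 0$; by Corollary~\ref{MN cor}(ii) this value equals $\varkappa^{\mu}(\sigma)$, whence $\varkappa^{\mu}(\sigma)\neq 0$. As $\sigma$ was arbitrary, $\varkappa^{\mu}$ is quasi $p$-Steinberg for $S_n$.

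I do not expect a genuine obstacle here: the statement is an immediate consequence of Corollary~\ref{MN cor} and Lemma~\ref{proj}. The hypothesis $\mu\notin\{(n),(1^n)\}$ plays no role in the equivalence itself --- it only ensures that $\chi^{\widehat{\bla}^j}$ and $\varkappa^{\mu}$ are non-linear, i.e. that we are in the regime of interest. The single point I would address explicitly at the start is the compatibility of primes: the statement is non-vacuous precisely when $p\mid|S_n|$ (equivalently $p\le n$), in which case $p\mid|G(r,1,n)|=r^n n!$ as well, so both notions of quasi $p$-Steinberg are defined.
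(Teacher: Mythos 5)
Your proposal is correct and follows essentially the same route as the paper: both directions are handled via Lemma~\ref{proj}(i), the identity $\chi^{\widehat{\bla}^j}((0,\dots,0;\sigma))=\varkappa^{\mu}(\sigma)$ from Corollary~\ref{MN cor}(ii), and the transfer $\chi^{\widehat{\bla}^0}(\pi)=\varkappa^{\mu}(\sigma)$ together with the simultaneous (non)vanishing of $\chi^{\widehat{\bla}^j}(\pi)$ and $\chi^{\widehat{\bla}^0}(\pi)$ from Corollary~\ref{MN cor}(iii). Your explicit remarks on lifting $p$-regular elements and on the compatibility of the prime $p$ are minor additions the paper leaves implicit, but the argument is the same.
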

	
	\begin{proof}
		Let $\chi^{\widehat{\bla}^j}$, where $\bla_j=\mu$, be a quasi $p$-Steinberg character of $G(r,1,n)$. Consider a $p$-regular element $\sigma$ of $S_n$. Then, Corollary \ref{MN cor}(ii) implies that 
		$$\varkappa^{\mu}(\sigma) =\chi^{\widehat{\bla}^j}((0,\ldots,0; \sigma)),$$
		which is nonzero because $(0,\ldots,0; \sigma)$ is a $p$-regular element of $G(r,1,n)$.
		
		Conversely, assume that $\varkappa^\mu$ is a quasi $p$-Steinberg character of $S_n$. Let  $\pi=(z_1,\ldots,z_n;\sigma)$ be a $p$-regular element of $G(r,1,n)$. 
Then, $\sigma$ is a $p$-regular element of $S_n$ by Lemma \ref{proj}(i), and we have $\varkappa^\mu(\sigma)\neq 0$. Now by Corollary \ref{MN cor}(ii), we have

\begin{displaymath}
\chi^{\widehat{\bla}^0}(\pi)=\varkappa^\mu(\sigma).
\end{displaymath}
 and
 $\chi^{\widehat{\bla}^j}(\pi)$ is nonzero if and only if $\chi^{\widehat{\bla}^0}(\pi)$ is nonzero for any $j$ 
Therefore, $\chi^{\widehat{\bla}^j}$ is a quasi $p$-Steinberg character of $G(r,1,n)$. 
\end{proof}		

\begin{proof}[{\bf Proof of Theorem~\ref{MTq1}}]		
		Let $\chi^{\bla}$ be a quasi $p$-Steinberg character of $G(r,1,n)$, where $\bla = (\bla_0, \bla_1, \ldots, \bla_{r-1})$. By Proposition~\ref{reduction}, we have either $\bla=\widehat{\bla}^j$ for some $0 \leq j \leq r-1$ or $\bla=\widehat{\bla}^{j,k}$ for some $0 \leq j\neq k\leq r-1$.
		
		Using Proposition~\ref{QGS} and Table~\ref{tab:quasi-Sn}, we get the quasi $p$-Steinberg characters of $G(r,1,n)$ corresponding to $\widehat{\bla}^j$ in Table \ref{tab:quasi-Gr1n}. 
		
		Now we shall classify quasi $p$-Steinberg characters of $G(r,1,n)$ corresponding to $\widehat{\bla}^{j,k}$. As we observe in the proof of Proposition~\ref{reduction}, this kind of character may appear only when $p \mid n$.
	
When $n \geq 5$, there does not arise a quasi $p$-Steinberg character of this kind because of the following observations:
			\begin{enumerate}[(i)]
				\item Either the element $\alpha_2 = (0, \ldots, 0;(1,2,\ldots, n-2)(n-1,n))$ or the element $\alpha_3 = (0, \ldots, 0;(1,2,\ldots, n-3)(n-2,n-1,n))$ is a $p$-regular element of $G(r,1,n)$;
				
				\item $\chi^{\bla}(\alpha_2) = \chi^{\bla}(\alpha_3) = 0$. 
			\end{enumerate}

Now we consider $n < 5$ in which some additional quasi $p$-Steinberg characters for $G(r,1,n)$ will arise.

Let us start with $n=2, p = 2$. The irreducible character corresponding to $\bla=\widehat{\bla}^{j,k}$, where $\bla^j = (1),$ is a quasi $2$-Steinberg character by Theorem \ref{p23}(i). 
	
Let us consider the case $n=3, p = 3$. The irreducible characters corresponding to $\widehat{\bla}^{j,k}$, where $\lambda_j \vdash 2$, are three-dimensional and hence, quasi $3$-Steinberg characters of $G(r,1,3)$ by Theorem \ref{p23}(i).

Now consider the case $n=4, p=2$. The irreducible characters corresponding to $\bla=\widehat{\bla}^{j,k}$ where $\bla_j \in \{(3), (1,1,1)\}$, are quasi $2$-Steinberg characters of $G(r,1,4)$ by Theorem \ref{p23}(i) by virtue of being four-dimensional characters.
		
We now discuss the final case: $n=4, p=2$ and  $\bla=\widehat{\bla}^{j,k}$ with $\bla_j = (2,1)$. For an $r$-partite ribbon tableau $T=(T_0,\ldots, T_{r-1})$ of shape $\widehat{\bla}^{j,k}$, we have $T_l=\emptyset$ for all $ l \notin \{j,k\}$. Let $(z_1, \ldots, z_4; \sigma)$ be a $p$-regular element in $G(r,1,4)$, thus, either $\sigma = (1)$ or $\sigma$ has cycle type $(3,1)$, and $z_1, \ldots, z_4$ are of odd orders. When $\sigma = (1)$, then there are exactly eight $r$-partite ribbon tableaux $T=(T_0,\ldots, T_{r-1})$ as we have eight different pairs $(T_j,T_k)$:

\begin{displaymath}
\ytableausetup{centertableaux}
		\ytableaushort
		{12,3}
		\hspace{0.3cm}
		\ytableausetup{centertableaux}
		\ytableaushort
		{4}~;
		\hspace{1cm}
		\ytableaushort
		{13,2}
		\hspace{0.3cm}
		\ytableausetup{centertableaux}
		\ytableaushort
		{4}~;
		\hspace{1cm}
		\ytableausetup{centertableaux}
		\ytableaushort
		{12,4}
		\hspace{0.3cm}
		\ytableausetup{centertableaux}
		\ytableaushort
		{3}~;
		\hspace{1cm}
		\ytableausetup{centertableaux}
		\ytableaushort
		{14,2}
		\hspace{0.3cm}
		\ytableausetup{centertableaux}
		\ytableaushort
		{3}~;
		\end{displaymath}
\begin{displaymath}
\ytableausetup{centertableaux}
		\ytableaushort
		{13,4}
		\hspace{0.3cm}
		\ytableausetup{centertableaux}
		\ytableaushort
		{2}~;
		\hspace{1cm}
\ytableausetup{centertableaux}
		\ytableaushort
		{14,3}
		\hspace{0.3cm}
		\ytableausetup{centertableaux}
		\ytableaushort
		{2}~;
		\hspace{1cm}
		\ytableausetup{centertableaux}
		\ytableaushort
		{23,4}
		\hspace{0.3cm}
		\ytableausetup{centertableaux}
		\ytableaushort
		{1}~;
		\hspace{1cm}
		\ytableausetup{centertableaux}
		\ytableaushort
		{24,3}
		\hspace{0.3cm}
		\ytableausetup{centertableaux}
		\ytableaushort
		{1}~.
\end{displaymath}

By Murnaghan--Nakayama rule for $G(r,1,n)$, we have 
\begin{align*}
  & \chi^{\bla}((z_1,z_2, z_3, z_4, (1))) \\ 
  & = 2(\omega^{jz_1+jz_2+jz_3+kz_4}+ \omega^{jz_1+jz_3+jz_4+kz_2} + \omega^{jz_1+jz_2+jz_4+kz_3}+\omega^{kz_1+jz_2+jz_3+jz_4}) \\ 
  &=  2\omega^{kz_1+jz_2+jz_3+jz_4}(\omega^{(j-k)(z_1-z_4)}+\omega^{(j-k)(z_1-z_2)}+\omega^{(j-k)(z_1-z_3)}+1),
\end{align*}
which can be zero only when $r$ is an even integer \cite[Main Theorem, p.2]{LL00}. However, when $r$ is even, the sum above is nonzero by Lemma \ref{proj}(ii) because $z_1, z_2, z_3, z_4$ have odd orders in $\Z/r\Z$ \cite[Table 3.1, p.141]{PR98}.

When $\sigma$ is of cycle type $(3,1)$, say $\sigma = (1,2,3)$, then there is exactly one $r$-partite ribbon tableaux $T$ given by 
\begin{displaymath}
T_j = \begin{ytableau}
	1&1\\1
\end{ytableau}
\hspace{0.5cm}
T_k = \begin{ytableau}
	2
\end{ytableau}
\hspace{0.5cm}
T_l = \emptyset \text{ for all } l \neq j, k.
\end{displaymath}

This implies that $\chi^{\bla}((z_1,z_2, z_3, z_4, (1,2,3)))  = -\omega^{jz_1+jz_2+jz_3+kz_4} \neq 0.$
\end{proof}

We now describe the quasi $p$-Steinberg characters of $G(r,q,n)$ for $q \neq 1$. 
Given $\bla = (\bla_0, \bla_1, \ldots, \bla_{r-1})$, an $r$-partite partition of $n$, recall that the notation $(\chi^{\bla})^*$ denotes an irreducible character of $\Grqn$ which appears in the decomposition of the restriction of the irreducible character $\chi^{\bla}$ of $G(r,1,n)$ to $\Grqn$. Note that $(\chi^{\bla})^*$ may not be unique and is an irreducible character $\chi^{(\tbla,\delta)}$, where $\tbla$ is an $(m,q)$-necklace obtained from $\bla$ and $\delta \in H_{\bla}$.

Proposition \ref{addcases} describes two specific cases for particular values of $r,q$ and $n$ important in the study of quasi $p$-Steinberg characters of $G(r,q,n)$ in Theorem \ref{MTqnot1}. For $n=3$ and $ 0 \leq j \neq k \neq l \leq r-1$, define $$\bnu^{j,k,l} = (\bnu_0, \bnu_1, \ldots, \bnu_{r-1}) \in \Y(r,3) \mbox { such that } \bnu_j=\bnu_k=\bnu_l=(1).$$ For $n=4$ and $ 0 \leq j \neq k \leq r-1$, define $\bnu^{j,k} = (\bnu_0, \bnu_1, \ldots, \bnu_{r-1}) \in \Y(r,4)$ such that $ \bnu_j=\bnu_k \vdash 2.$ For a multiple $r$ of $3$, define $$X_1 = \{(j,k,l) \mid 0 \leq j \leq r-1, k = (j+ \frac{r}{3}) \mkern-18mu \pmod{r}, l = (j+ \frac{2r}{3}) \mkern-18mu \pmod{r} \},$$
 and for $r$ even, define $X_2 = \{(j,k) \mid 0 \leq j \leq r-1,  k = (j+ \frac{r}{2}) \mkern-6mu \pmod{r} \}.$

\begin{prop}\phantomsection\label{addcases}
	\begin{enumerate}[(i)]
		\item The irreducible character $\chi^{\bnu^{j,k,l}}$ of $G(r,1,3)$ for $\bnu^{j,k,l} \in \Y(r,3)$  
		 decomposes into three two-dimensional irreducible characters of $G(r,q,3)$ if and only if $r, q$ are multiples of $3$ and $(j,k,l) \in X_1$. 
		
		\item The irreducible character $\chi^{\bnu^{j,k}}$ of $G(r,1,4)$ for $\bnu^{j,k} \in \Y(r,4)$ decomposes into two three-dimensional irreducible characters of $G(r,q,4)$ if and only if $r, q$ are even and $(j,k) \in X_2$. 
	\end{enumerate}
\end{prop}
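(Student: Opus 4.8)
The plan is to convert both parts into a computation of $|H_{\bla}|$, for $\bla = \bnu^{j,k,l}$ and for $\bla = \bnu^{j,k}$. By Theorem~\ref{irrgp} the restriction $\mathrm{Res}^{G(r,1,n)}_{G(r,q,n)}(V^{\bla})$ is multiplicity free with exactly $|H_{\bla}|$ irreducible constituents, and since these constituents are $G(r,1,n)$-conjugate (Clifford theory) they all have the same dimension, namely $\dim V^{\bla}/|H_{\bla}|$. So first I would record the relevant dimensions via the dimension formula for $\Y(r,n)$ (\cite[Theorem~6.7]{MS16}): for $\bnu^{j,k,l}\in\Y(r,3)$ one gets $\dim V^{\bnu^{j,k,l}} = 3!/(1!\,1!\,1!) = 6$, and for $\bnu^{j,k}\in\Y(r,4)$, using that $\bnu_j = \bnu_k\vdash 2$ forces $\dim\varkappa^{\bnu_j} = \dim\varkappa^{\bnu_k} = 1$, one gets $\dim V^{\bnu^{j,k}} = 4!/(2!\,2!) = 6$. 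Hence ``$\chi^{\bnu^{j,k,l}}$ decomposes into three two-dimensional irreducible characters of $G(r,q,3)$'' is equivalent to $|H_{\bnu^{j,k,l}}| = 3$, and ``$\chi^{\bnu^{j,k}}$ decomposes into two three-dimensional irreducible characters of $G(r,q,4)$'' is equivalent to $|H_{\bnu^{j,k}}| = 2$; it remains to identify these two conditions.

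Next I would make the $H$-action explicit. Via the surjection $G(r,1,n)\to\Zr$ sending $(z_1,\dots,z_n;\sigma)\mapsto\sum_i z_i$, a linear character $(z_1,\dots,z_n;\sigma)\mapsto\omega^{b\sum_i z_i}$ of $G(r,1,n)$ is trivial on $G(r,q,n)$ exactly when $b\in m\Zr$ with $m=r/q$; thus $H$ is cyclic of order $q$, and tensoring $V^{\bla}$ by such a character cyclically shifts the components, sending $\bla=(\bla_0,\dots,\bla_{r-1})$ to the tuple whose $i$-th component is $\bla_{i-b}$. This is precisely the rotation action of $\Zr$ on the $(m,q)$-necklace $\tbla$ underlying the parametrization preceding Theorem~\ref{irrgp}, so $H_{\bla} = m\Zr\cap\Stab_{\Zr}(\bla)$, where $\Zr$ acts on $\Y(r,n)$ by cyclic shift of components. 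Since all nonempty components of $\bnu^{j,k,l}$ (resp.\ of $\bnu^{j,k}$) coincide, $\Stab_{\Zr}(\bnu^{j,k,l})$ is the setwise stabilizer of $\{j,k,l\}$ (resp.\ $\Stab_{\Zr}(\bnu^{j,k})$ that of $\{j,k\}$) under the translation action of $\Zr$ on its subsets.

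I would then invoke the elementary fact that a subgroup of $\Zr$ stabilizing a finite subset $S\subseteq\Zr$ translates $S$ freely, so $|\Stab_{\Zr}(S)|$ divides $|S|$, with equality exactly when $S$ is a coset of the unique subgroup of $\Zr$ of order $|S|$. For $|S|=3$ this gives: $\Stab_{\Zr}(\{j,k,l\})$ has order $3$ iff $3\mid r$ and $\{j,k,l\}$ is a coset of $\langle r/3\rangle$, equivalently $(j,k,l)\in X_1$ (recall $\bnu^{j,k,l}$ depends only on the set $\{j,k,l\}$), and is trivial otherwise. For $|S|=2$: $\Stab_{\Zr}(\{j,k\})$ has order $2$ iff $2\mid r$ and $k=j+r/2$, equivalently $(j,k)\in X_2$, and is trivial otherwise. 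Intersecting with $m\Zr$ then finishes things: $|H_{\bnu^{j,k,l}}| = 3$ iff moreover $r/3\in m\Zr$, i.e.\ $m\mid r/3$, i.e.\ $3\mid q$ (which forces $3\mid r$ since $q\mid r$), proving (i); and $|H_{\bnu^{j,k}}| = 2$ iff moreover $r/2\in m\Zr$, i.e.\ $2\mid q$ (with $2\mid r$ already implied by $(j,k)\in X_2$), proving (ii).

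The only delicate point --- the ``hard part'' --- is the bookkeeping in the second step: one must correctly match the rotation action of $H$ on necklaces with translation by $m=r/q$ on $\Zr$ (not by $1$ or by $q$), so that the divisibility conditions come out as ``$3\mid q$'' and ``$2\mid q$'' and not as conditions on $r$ alone, and one must take some care that the coset conditions on $\{j,k,l\}$ and $\{j,k\}$ translate exactly into ``$(j,k,l)\in X_1$'' and ``$(j,k)\in X_2$'' (up to the obvious reorderings), given that $\bnu^{j,k,l}$ and $\bnu^{j,k}$ depend only on their underlying index sets. Everything else is routine.
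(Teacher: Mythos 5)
Your proposal is correct and follows essentially the same route as the paper: the paper's proof of Proposition~\ref{addcases} simply invokes Theorem~\ref{irrgp} (Clifford theory and the necklace parametrization), and your argument is a faithful, fully worked-out version of that — identifying $H$ with the order-$q$ subgroup $m\Zr$, computing $|H_{\bla}|$ as the stabilizer of the index set under rotation, and dividing the dimension $6$ accordingly. No gaps; the bookkeeping of the $X_1$, $X_2$ conditions and the divisibility by $q$ (not just $r$) is handled correctly.
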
	

\begin{proof}
The proof follows by Theorem \ref{irrgp}.
\end{proof}	

We now prove Theorem \ref{MTqnot1} which characterizes quasi $p$-Steinberg characters of $G(r,q,n)$. In particular, this gives a classification of hefty quasi $p$-Steinberg characters of $G(r,q,n)$ in Theorem \ref{MTqnot1-hefty}.

	\begin{theorem}\label{MTqnot1}
		Given an $r$-partite partition $\bla = (\bla_0, \bla_1, \ldots, \bla_{r-1})$ of $n$, a non-linear irreducible character $(\chi^{\bla})^*$ of $G(r,q,n)$ is a quasi $p$-Steinberg character if and only if one of the following holds:
				\begin{enumerate}[(i)]
           \item $\bla$ corresponds to a quasi $p$-Steinberg character of $G(r,1,n)$ which are given in Table \ref{tab:quasi-Gr1n}. In this case, 
           $$ (\chi^{\bla})^* = \mbox{Res}^{G(r,1,n)}_{\Grqn}\chi^{\bla}.$$
           \item $n=3$, $r, q$ are multiples of $3$ and $\bla = {\bnu}^{j,k,l}$ for ${(j,k,l) \in X_1}$. 
           \item $n=4$, $r, q$ are even and $\bla = {\bnu}^{j,k}$ for $(j,k) \in X_2$.
		\end{enumerate}	
	\end{theorem}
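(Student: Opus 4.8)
The plan is to reduce to the already-established $G(r,1,n)$-classification (Theorem~\ref{MTq1}) by Clifford theory; write $N=G(r,q,n)\trianglelefteq G=G(r,1,n)$. Two facts are used throughout: an element is $p$-regular in $N$ if and only if it is $p$-regular in $G$, since its order does not depend on the ambient group; and, by Theorem~\ref{irrgp} together with the fact that the $H$-action on $\Y(r,n)$ cyclically shifts the $r$ components by $m=r/q$, the restriction $\mathrm{Res}^G_N\chi^{\bla}$ is irreducible exactly when the stabiliser $H_{\bla}$ is trivial. When $H_{\bla}$ is trivial one has $(\chi^{\bla})^*=\chi^{\bla}|_N$ and everything transfers directly; when $H_{\bla}$ is nontrivial one uses Corollary~\ref{resqs} (all constituents of $\mathrm{Res}^G_N\chi^{\bla}$ are quasi $p$-Steinberg or none) and Lemma~\ref{nosplit} (on a conjugacy class that does not split in $N$, detected via $d(\pi)=1$ by Theorem~\ref{split}, the value $\chi^{\bla}(g)$ is a positive multiple of $(\chi^{\bla})^*(g)$).

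The first observation is that $H_{\bla}$ is trivial whenever $\bla=\widehat{\bla}^j$, or $\bla=\widehat{\bla}^{j,k}$ with $n\ge 3$ --- such diagrams carry no repeated nonempty component of matching shape on which a nontrivial cyclic shift can act; and if $H_{\bla}$ is nontrivial then $\bla$ has at least two equal nonempty components, whence a short estimate with the dimension formula shows that the constituent $(\chi^{\bla})^*$, of dimension $\dim\chi^{\bla}/|H_{\bla}|$, is hefty \emph{unless} $n=3$ and $\bla=\bnu^{j,k,l}$ with $(j,k,l)\in X_1$, or $n=4$ and $\bla=\bnu^{j,k}$ with $(j,k)\in X_2$. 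In those two cases $(\chi^{\bla})^*$ has dimension $2$, resp.\ $3$ (Proposition~\ref{addcases}), hence is quasi $p$-Steinberg by Theorem~\ref{p23}(i); this establishes the ``if'' direction for families (ii) and (iii), and for family (i) one only has to note that $H_{\bla}$ is trivial (for $n=2$, after discarding the degenerate case where the two-dimensional constituent splits into linear characters), so $(\chi^{\bla})^*=\chi^{\bla}|_N$ is nonzero on every $p$-regular element of $N$ because $\chi^{\bla}$ is quasi $p$-Steinberg on $G$ by Theorem~\ref{MTq1}.

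For the ``only if'' direction, let $(\chi^{\bla})^*$ be non-linear and quasi $p$-Steinberg. If $H_{\bla}$ is trivial then $\chi^{\bla}|_N$ does not vanish on any $p$-regular element of $N$; feeding in the zero-coloured elements $(0,\dots,0;\sigma)$ with $\sigma$ of cycle type $(n)$, $(n-1,1)$, $(n-2,2)$ or $(n-3,3)$ --- all lying in $N$, and one of which is always $p$-regular since $p\le 3$ divides at most two of $n,n-1,n-2,n-3$ and $p\ge 5$ at most one --- the ribbon-tableau argument of Proposition~\ref{reduction} forces $\bla=\widehat{\bla}^j$ or $\widehat{\bla}^{j,k}$, and then evaluation on the elements $(0,\dots,0;\sigma)$, $\sigma\in S_n$, shows via Corollary~\ref{MN cor}(ii) and Proposition~\ref{QGS} that $\varkappa^{\bla_j}$ is a quasi $p$-Steinberg character of $S_n$; combined with the case-by-case Murnaghan--Nakayama computations of Theorem~\ref{MTq1} this pins $\bla$ to Table~\ref{tab:quasi-Gr1n}. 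If $H_{\bla}$ is nontrivial and $(\chi^{\bla})^*$ is hefty, then $\bla$ is neither $\widehat{\bla}^j$ nor $\widehat{\bla}^{j,k}$, so no ribbon tableau of $\bla$ has cycle type $(n-1,1)$, and the goal is to exhibit a $p$-regular element of $N$ of such type (or a related non-tileable type) whose $G$-class does not split, whereupon Lemma~\ref{nosplit} forces $(\chi^{\bla})^*$ to vanish there and contradicts the hypothesis.

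The main obstacle is this last step. The ``natural'' vanishing element of $N$ can have a $G$-class that splits (precisely when $\gcd$ of its cycle lengths with $q$ exceeds $1$), so one must then replace it by a different $p$-regular element of $N$ whose class is non-splitting but on which $\chi^{\bla}$ still vanishes by the tiling criterion. For small $n$ with $q$ highly divisible the zero-coloured candidates can all split, and one is forced to colour some cycles --- bearing in mind that a coloured $\ell$-cycle has order $\ell\cdot\mathrm{ord}(c)$, so keeping the order coprime to $p$ confines the colour $c$ to the $p'$-part of $\Z/r\Z$, while non-splitting via Theorem~\ref{split} demands that the colours and cycle lengths be coprime to $q$. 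Showing that this balancing act can always be carried out for hefty $(\chi^{\bla})^*$ attached to $\bla$ outside the list, while it provably fails exactly for the diagrams of (ii) and (iii), is the technical heart of the proof.
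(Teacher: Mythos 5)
Your scaffolding (Corollary~\ref{resqs} and Theorem~\ref{irrgp} for the Clifford-theoretic reduction, Lemma~\ref{nosplit} with Theorem~\ref{split}, Theorem~\ref{p23}(i) for the small constituents, and transfer to Theorem~\ref{MTq1} via Corollary~\ref{MN cor} and Proposition~\ref{QGS}) is the same as the paper's, and your ``if'' direction is essentially right. But the ``only if'' direction contains a genuine gap that you yourself flag: the case where $H_{\bla}$ is nontrivial and the constituents are hefty is never finished, and the ``balancing act'' you describe --- colouring cycles so that the order stays prime to $p$ while the class does not split --- is exactly the step you would need and do not carry out. As written, the proposal therefore does not prove the statement.

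The paper dissolves this obstacle by a different order of operations and a better choice of test elements. It applies Lemma~\ref{nosplit} \emph{in the non-vanishing direction, before} asking whether the restriction is irreducible, at the elements $\alpha_1=(0,\ldots,0;(1,\ldots,n-1))$, $\alpha_2=(0,\ldots,0;(1,\ldots,n-2))$, $\alpha_3=(0,\ldots,0;(1,\ldots,n-3))$: each of these has a cycle of length $1$, so $d(\pi)=1$ and by Theorem~\ref{split} their $G(r,1,n)$-classes never split in $\Grqn$, for every $q$. Hence $(\chi^{\bla})^*(\alpha_i)\neq 0$ forces $\chi^{\bla}(\alpha_i)\neq 0$, and the Murnaghan--Nakayama rule pins $\bla$ to a short list of shapes ($\widehat{\bla}^j$, $\widehat{\bla}^{j,k}$, and, when $p\mid n-1$, also $\bla_j\vdash n-2$ together with a $2$-box or two $1$-boxes). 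Only on this short list does one examine $H_{\bla}$: a nontrivial stabilizer is then possible only for $n=2$ with $\widehat{\bla}^{j,k}$, $n=3$ with $\bnu^{j,k,l}$, or $n=4$ with $\bnu^{j,k}$, and these are settled by Proposition~\ref{addcases} and Theorem~\ref{p23}(i); for every other shape $(\chi^{\bla})^*=\mbox{Res}^{G(r,1,n)}_{\Grqn}\chi^{\bla}$, so the vanishing of $\chi^{\bla}$ at the zero-coloured $n$-cycle, at $\alpha_2$ or at $\alpha_3$ (all elements of $\Grqn$) rules out the quasi $p$-Steinberg property with no splitting issue at all. So your ``technical heart'' is unnecessary: you never need a vanishing element with a non-splitting class, only non-vanishing at fixed-point elements to constrain $\bla$ first. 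A smaller correction: your test types $(n-2,2)$ and $(n-3,3)$ are poorly chosen --- an element of type $(n-2,2)$ is never $2$-regular, one of type $(n-3,3)$ is never $3$-regular, and neither carries the fixed point that guarantees non-splitting --- whereas the paper's $(n-2,1,1)$ and $(n-3,1,1,1)$ serve both purposes.
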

	
	\begin{proof}
	For an $r$-partite partition $\bla = (\bla_0, \bla_1, \ldots, \bla_{r-1})$ of $n$, consider a quasi $p$-Steinberg character $(\chi^{\bla})^*$ of $G(r,q,n)$. By Corollary \ref{resqs}, either all the irreducible characters $\chi^{(\tbla,\delta)}$ appearing in $\mbox{Res}^{G(r,1,n)}_{\Grqn}\chi^{\bla}$ are quasi $p$-Steinberg or none of these are. Therefore, we can arbitrarily choose $(\chi^{\bla})^*$.

	We consider the cases $p \nmid n$ and $p \mid n$ separately. 
	
	\noindent \textbf{Case 1:} $p \nmid n$. Since $G(r,q,n)$ is a normal subgroup of $G(r,1,n)$, our method is to apply Lemma \ref{nosplit} by choosing a $p$-regular element in $G(r,1,n)$ whose conjugacy class does not split in $G(r,q,n)$. In the light of Theorem \ref{split}, we need to consider two subcases: $p \nmid n-1$ and $p \mid n-1$.
	
	\textbf{Case 1(a):} $p \nmid n-1$. The element $\alpha_1 = (0,\ldots, 0; (1,2, \ldots, n-1))$ is a $p$-regular element of $\G(r,q,n)$. Thus $(\chi^{\bla})^* (\alpha_1) \neq 0$, and by Lemma \ref{nosplit} we get that  $\chi^{\bla}(\alpha_1) \neq 0$. By Murnaghan--Nakayama rule, either $\bla=\widehat{\bla}^j$ for some $0 \leq j \leq r-1$ or $\bla=\widehat{\bla}^{j,k}$ for some $0 \leq j\neq k\leq r-1$.
	
	 However, we do not get any non-linear quasi $p$-Steinberg character of $G(r,q,n)$ in this subcase because of the following reason. When $\bla = \widehat{\bla}^j$ for some $0 \leq j \leq r-1$, Theorem \ref{irrgp} implies that $ (\chi^{\bla})^* = \mbox{Res}^{G(r,1,n)}_{\Grqn}\chi^{\bla}.$ 	
	When $\bla=\widehat{\bla}^{j,k}$ for some $0 \leq j\neq k\leq r-1$, for $n \geq 3$ we have $(\chi^{\bla})^* = \mbox{Res}^{G(r,1,n)}_{\Grqn}\chi^{\bla}$.	
	From Table \ref{tab:quasi-Gr1n}, we can see that there does not exist a non-linear quasi $p$-Steinberg character of $G(r,1,n)$ when $p\nmid n$ and $p \nmid n-1$.

	For $n=2$ and $\bla=\widehat{\bla}^{j,k}$, $\chi^{\bla}$ is a two-dimensional character which is not a quasi $p$-Steinberg character of $G(r,1,2)$ for $p \neq 2$.  When  $\chi^{\bla}$ decomposes, the two one-dimensional characters appearing in $\mbox{Res}^{G(r,1,2)}_{G(r,q,2)}\chi^{\bla}$ are quasi $p$-Steinberg characters for any prime $p$ that divides the order of $G(r,q,2)$.
	
	\textbf{Case 1(b):} $p \mid n-1$. Then, $p \nmid n-2$ and $\alpha_2 = (0,\ldots,0; (1,2, \ldots,n-2))$ is a $p$-regular element of $\Grqn$. Thus $(\chi^{\bla})^* (\alpha_2) \neq 0$, and by Lemma \ref{nosplit} we have  $\chi^{\bla}(\alpha_2) \neq 0$. Murnaghan--Nakayama rule for $G(r,1,n)$ implies that $\bla$ must be of one of the following forms:
	\begin{enumerate}[(a)]
		\item $\bla = \widehat{\bla}^j$;
		\item $\bla=\widehat{\bla}^{j,k}$;
		\item $\bla_j \vdash n-2$, $ \bla_k \vdash 2$ for some $k \neq j$, $\bla_l = \emptyset$ for all $l \notin \{j,k\}$;
		\item $\bla_j \vdash n-2$, $ \bla_k = (1)$ for some $k \neq j$, $ \bla_l = (1)$ for some $l \notin \{j,k\}$, and $ \bla_u = \emptyset$ for all $u \notin \{j,k,l\}$;
	\end{enumerate}
	When $\bla$ is of one of the forms (a)-(d) and for $n \geq 5$, by Theorem \ref{irrgp} we have $$ (\chi^{\bla})^* = \mbox{Res}^{G(r,1,n)}_{\Grqn}\chi^{\bla}.$$ Furthermore, for $\bla$ of the form (b), (c) or (d), $\chi^{\bla}$ vanishes on the $p$-regular element $\alpha = (0, \ldots, 0; (1,2,\ldots,n)) \in G(r,q,n)$ which implies that the character $(\chi^{\bla})^*$ is not a quasi $p$-Steinberg character of $\Grqn$. Now consider that $\bla$ is of the form (a), i.e., $\bla = \widehat{\bla}^j$. By Corollary \ref{MN cor}(ii) and by $(\chi^{\widehat{\bla}^j})^*$ being a quasi $p$-Steinberg character of $G(r,q,n)$, for a $p$-regular element $\sigma \in S_n$, we have  $$ \varkappa^{\bla_j}(\sigma) = \chi^{\widehat{\bla}^j} (0, \ldots, 0; \sigma) = (\chi^{\widehat{\bla}^j})^*  (0, \ldots, 0; \sigma) \neq 0.$$ This implies that $\varkappa^{\bla_j}$ is a quasi $p$-Steinberg character of $S_n$. Thus, $\chi^{\widehat{\bla}^j}$ is a quasi $p$-Steinberg character of $G(r,1,n)$  by Proposition \ref{QGS}.
	
	We now consider $n < 5$. Of course, $n \neq 2 $ as $p \mid n-1$. For $n=3, p=2$, when $\bla$ is of the form (a), (b) or (c), then it can be seen from Table \ref{tab:quasi-Gr1n} that $\chi^{\bla}$ is a quasi $2$-Steinberg character of $G(r,1,3)$. Thus, $(\chi^{\bla})^*$ is a quasi $2$-Steinberg character of $G(r,q,3)$ because $$ (\chi^{\bla})^* = \mbox{Res}^{G(r,1,3)}_{G(r,q,3)}\chi^{\bla}.$$ A similar argument holds true when $n=4, p=3$ and $\bla$ is of the form (a). From Table \ref{tab:quasi-Gr1n}, we see that $\bla$ cannot be of the form (b) or (d) for $n=4, p=3$. 
	
	When $\bla$ is of the form (d) for $n=3, p=2$, i.e. $\bla = \bnu^{j,k,l}$, then Theorem \ref{p23}(i) and Proposition \ref{addcases} imply that $(\chi^{\bla})^*$ occurring in $\mbox{Res}^{G(r,1,3)}_{G(r,q,3)}\chi^{\bla}$ is a quasi $2$-Steinberg character if and only if $r, q$ are multiples of $3$ and $(j,k,l) \in X_1$. The analogous arguments can be used to deduce that when $\bla$ is of the form (c) for $n=4, p=3$, i.e. $\bla = \bnu^{j,k}$, the character $(\chi^{\bla})^*$ occurring in $\mbox{Res}^{G(r,1,4)}_{G(r,q,4)}\chi^{\bla}$ is a quasi $2$-Steinberg character if and only if $r,q$ are even and $(j,k) \in X_2$.
	
	\noindent \textbf{Case 2:} $p \mid n$. Then, $p \nmid n-1$ and $\alpha_1 = (0,\ldots,0; (1,2, \ldots,n-1))$ is a $p$-regular element of $\Grqn$. Thus $(\chi^{\bla})^* (\alpha_1) \neq 0$, and by Lemma \ref{nosplit} we get that  $\chi^{\bla}(\alpha_1) \neq 0$. By Murnaghan--Nakayama rule for $G(r,1,n)$, either $\bla=\widehat{\bla}^j$ for some $0 \leq j \leq r-1$ or $\bla=\widehat{\bla}^{j,k}$ for some $0 \leq j\neq k\leq r-1$. Using similar arguments as given in Case 1(b), we deduce that if $\bla=\widehat{\bla}^j$ for some $0 \leq j \leq r-1$, then $\chi^{\widehat{\bla}^j}$ is a quasi $p$-Steinberg character of $G(r,1,n)$.
	
	For $\bla=\widehat{\bla}^{j,k}$ for some $0 \leq j\neq k\leq r-1$ and  for $n \geq 5$, the character $(\chi^{\bla})^*$ is not a quasi $p$-Steinberg character of $\Grqn$ because of the following observations:
	\begin{enumerate}[(a)]
			\item $\alpha_2 = (0,\ldots, 0; (1,2, \ldots, n-2))$ and $\alpha_3 = (0,\ldots, 0; (1,2, \ldots, n-3))$ are $p$-regular elements of $\Grqn$ for $p > 3$;
			\item $\alpha_2$ is $3$-regular and $\alpha_3$ is $2$-regular;
			\item $\chi^{\bla}(\alpha_2) = \chi^{\bla}(\alpha_3) = 0$; 
			\item $(\chi^{\bla})^* = \mbox{Res}^{G(r,1,n)}_{\Grqn}\chi^{\bla}.$
		\end{enumerate} 
	
Now, we discuss the case when $\bla=\widehat{\bla}^{j,k}$ for some $0 \leq j\neq k\leq r-1$ and  $n < 5$.	For $n=2, p=2$, we have $\La_j=(1)$ for some $j$, $\La_k = (1)$ for some $k \neq j$, $\La_l = \emptyset$ for all $l \notin \{j,k\}$. Therefore, the character $\chi^{\bla}$ is a two-dimensional quasi $2$-Steinberg character of $G(r,1,2)$. The character $(\chi^{\bla})^*$ is a non-linear quasi $2$-Steinberg character of $G(r,q,2)$ if and only if  $(\chi^{\bla})^* = \mbox{Res}^{G(r,1,2)}_{G(r,q,2)}\chi^{\bla}$. The character $\chi^{\bla}$ decomposes into two linear characters of $G(r,q,2)$ if and only if both $r$ and $q$ are even, and $k = j+\frac{r}{2}\mod(r)$. 

For $n=3, p=3$ and $n=4, p=2$, given $\bla=\widehat{\bla}^{j,k}$ implies that $(\chi^{\bla})^* = \mbox{Res}^{G(r,1,n)}_{G(r,q,n)}\chi^{\bla}$ (by Theorem \ref{irrgp}). Also, all the possible $\widehat{\bla}^{j,k}$ appear in Table \ref{tab:quasi-Gr1n}. Thus, for $n=3, p=3$ and $n=4, p=2$, the character $(\chi^{\widehat{\bla}^{j,k}})^*$ is a quasi $p$-Steinberg character of $G(r,q,n)$ if and only if $\chi^{\widehat{\bla}^{j,k}}$ is a quasi $p$-Steinberg character of $G(r,1,n)$.
\end{proof}	

\section*{Acknowledgements}
The authors thank Dipendra Prasad for encouragement regarding this project. The authors also thank Anupam Singh and Manoj Yadav for organizing Group Theory Sangam seminar series during which one of the seminars led to this project.

\bibliographystyle{plain}
\bibliography{Steinberg-CRG}

\begin{thebibliography}{10}

\bibitem{APR10}
Ron~M. Adin, Alexander Postnikov, and Yuval Roichman.
\newblock A {G}elfand model for wreath products.
\newblock {\em Israel J. Math.}, 179:381--402, 2010.

\bibitem{BB07}
Eli Bagno and Riccardo Biagioli.
\newblock Colored-descent representations of complex reflection groups
  {$G(r,p,n)$}.
\newblock {\em Israel J. Math.}, 160:317--347, 2007.

\bibitem{cst14}
Tullio Ceccherini-Silberstein, Fabio Scarabotti, and Filippo Tolli.
\newblock {\em Representation theory and harmonic analysis of wreath products
  of finite groups}, volume 410 of {\em London Mathematical Society Lecture
  Note Series}.
\newblock Cambridge University Press, Cambridge, 2014.

\bibitem{C76}
Arjeh~M. Cohen.
\newblock Finite complex reflection groups.
\newblock {\em Ann. Sci. \'Ecole Norm. Sup. (4)}, 9(3):379--436, 1976.

\bibitem{MR201524}
Charles~W. Curtis.
\newblock The {S}teinberg character of a finite group with a {$(B,\,N)$}-pair.
\newblock {\em J. Algebra}, 4:433--441, 1966.

\bibitem{D96}
M.~R. Darafsheh.
\newblock {$p$}-{S}teinberg characters of alternating and projective special
  linear groups.
\newblock {\em J. Algebra}, 181(1):196--206, 1996.

\bibitem{F93}
Walter Feit.
\newblock Extending {S}teinberg characters.
\newblock In {\em Linear algebraic groups and their representations ({L}os
  {A}ngeles, {CA}, 1992)}, volume 153 of {\em Contemp. Math.}, pages 1--9.
  Amer. Math. Soc., Providence, RI, 1993.

\bibitem{HR98}
Tom Halverson and Arun Ram.
\newblock Murnaghan-{N}akayama rules for characters of {I}wahori-{H}ecke
  algebras of the complex reflection groups {$G(r,p,n)$}.
\newblock {\em Canad. J. Math.}, 50(1):167--192, 1998.

\bibitem{MR876960}
J.~E. Humphreys.
\newblock The {S}teinberg representation.
\newblock {\em Bull. Amer. Math. Soc. (N.S.)}, 16(2):247--263, 1987.

\bibitem{MR0460423}
I.~Martin Isaacs.
\newblock {\em Character theory of finite groups}.
\newblock Academic Press [Harcourt Brace Jovanovich Publishers], New York,
  1976.
\newblock Pure and Applied Mathematics, No. 69.

\bibitem{jk81}
Gordon James and Adalbert Kerber.
\newblock {\em The representation theory of the symmetric group}, volume~16 of
  {\em Encyclopedia of Mathematics and its Applications}.
\newblock Addison-Wesley Publishing Co., Reading, Mass., 1981.
\newblock With a foreword by P. M. Cohn, With an introduction by Gilbert de B.
  Robinson.

\bibitem{LL00}
T.~Y. Lam and K.~H. Leung.
\newblock On vanishing sums of roots of unity.
\newblock {\em J. Algebra}, 224(1):91--109, 2000.

\bibitem{Mac95}
I.~G. Macdonald.
\newblock {\em Symmetric functions and {H}all polynomials}.
\newblock Oxford Mathematical Monographs. The Clarendon Press, Oxford
  University Press, New York, second edition, 1995.
\newblock With contributions by A. Zelevinsky, Oxford Science Publications.

\bibitem{MZ20}
Gunter Malle and Alexandre Zalesski.
\newblock Steinberg-like characters for finite simple groups.
\newblock {\em J. Group Theory}, 23(1):25--78, 2020.

\bibitem{MS16}
Ashish Mishra and Murali~K. Srinivasan.
\newblock The {O}kounkov-{V}ershik approach to the representation theory of
  {$G\sim S_n$}.
\newblock {\em J. Algebraic Combin.}, 44(3):519--560, 2016.

\bibitem{MS20}
Ashish Mishra and Shraddha Srivastava.
\newblock On representation theory of partition algebras for complex reflection
  groups.
\newblock {\em Algebr. Comb.}, 3(2):389--432, 2020.

\bibitem{MY98}
H.~Morita and H.-F. Yamada.
\newblock Higher {S}pecht polynomials for the complex reflection group
  {$G(r,p,n)$}.
\newblock {\em Hokkaido Math. J.}, 27(3):505--515, 1998.

\bibitem{PS22}
Digjoy Paul and Pooja Singla.
\newblock On quasi steinberg characters of symmetric and alternating groups and
  their double covers.
\newblock {\em J. Algebra Appl.}, to appear, 2022.

\bibitem{PZ16}
M.~A. Pellegrini and A.~E. Zalesski.
\newblock On characters of {C}hevalley groups vanishing at the non-semisimple
  elements.
\newblock {\em Internat. J. Algebra Comput.}, 26(4):789--841, 2016.

\bibitem{PR98}
Bjorn Poonen and Michael Rubinstein.
\newblock The number of intersection points made by the diagonals of a regular
  polygon.
\newblock {\em SIAM J. Discrete Math.}, 11(1):135--156, 1998.

\bibitem{Read77}
E.~W. Read.
\newblock On the finite imprimitive unitary reflection groups.
\newblock {\em J. Algebra}, 45(2):439--452, 1977.

\bibitem{ST54}
G.~C. Shephard and J.~A. Todd.
\newblock Finite unitary reflection groups.
\newblock {\em Canadian J. Math.}, 6:274--304, 1954.

\bibitem{MR1676282}
Richard~P. Stanley.
\newblock {\em Enumerative combinatorics. {V}ol. 2}, volume~62 of {\em
  Cambridge Studies in Advanced Mathematics}.
\newblock Cambridge University Press, Cambridge, 1999.
\newblock With a foreword by Gian-Carlo Rota and appendix 1 by Sergey Fomin.

\bibitem{MR80669}
Robert Steinberg.
\newblock Prime power representations of finite linear groups.
\newblock {\em Canadian J. Math.}, 8:580--591, 1956.

\bibitem{MR87659}
Robert Steinberg.
\newblock Prime power representations of finite linear groups. {II}.
\newblock {\em Canadian J. Math.}, 9:347--351, 1957.

\bibitem{S89}
John~R. Stembridge.
\newblock On the eigenvalues of representations of reflection groups and wreath
  products.
\newblock {\em Pacific J. Math.}, 140(2):353--396, 1989.

\bibitem{T97}
Pham~Huu Tiep.
\newblock {$p$}-{S}teinberg characters of finite simple groups.
\newblock {\em J. Algebra}, 187(1):304--319, 1997.

\end{thebibliography}
	
\end{document}